\renewcommand*\contentsname{Table of Content}
\definecolor{DarkBlue}{rgb}{.1, 0.35, 0.6} 
\definecolor{PeoBlue}{rgb}{.15, 0.8, 0.6} 
\definecolor{SecBlue}{rgb}{.15, 0.4, 0.4} 
\definecolor{DarkBrown}{rgb}{.5, 0.2, 0.2} 
\newtheorem{theorem}{Theorem}
\numberwithin{theorem}{section} 
\newtheorem{lem}[theorem]{Lemma}
\newtheorem{remark}[theorem]{Remark}
\newtheorem{theo}[theorem]{Theorem}
\theoremstyle{definition}
\newtheorem{prop}[theorem]{Proposition}
\newsavebox{\pullback}
\sbox\pullback{%
\begin{tikzpicture}%
\draw (0,0) -- (1ex,0ex);%
\draw (1ex,0ex) -- (1ex,1ex);%
\end{tikzpicture}}
\newcommand{\Fbb}{\mathbb{F}}
\newcommand{\Sbb}{\mathbb{S}}
\newcommand{\Gbb}{\mathbb{G}}
\newcommand{\ctwo}{\operatorname{E}^{hC_2}}
\newcommand{\csix}{{\operatorname{E}^{hC_6}}}
\newcommand{\csixv}{\operatorname{E}^{hC_6} \wedge V(0)}
\newcommand{\csixy}{\operatorname{E}^{hC_6} \wedge Y}
\newcolumntype{L}{>{$}l<{$}}
\newcommand{\obullet}{\mathbin{\mathpalette\make@circled\bullet}}
\newcommand{\make@circled}[2]{%
  \ooalign{$\m@th#1\smallbigcirc{#1}$\cr\hidewidth$\m@th#1#2$\hidewidth\cr}%
}
\newcommand{\smallbigcirc}[1]{%
  \vcenter{\hbox{\scalebox{0.77778}{$\m@th#1\bigcirc$}}}%
}
\newcommand {\R}{\mathbb{R}}
\newcommand {\Z}{\mathbb{Z}}
\newcommand {\C}{\mathbb{C}}
\newcommand {\F}{\mathbb{F}}
\newcommand {\W}{\mathbb{W}}
\newcommand {\E}{\operatorname{E}}
\pgfplotsset{compat=1.18}
\title{The $\operatorname{E}^{hC_6}_2$-Homology of $\mathbb{R}P^2$ and $\mathbb{R}P^2 \wedge \mathbb{C}P^2$}
\author[Bobkova]{Irina Bobkova}
\author[Carlisle]{Jack Carlisle}
\author[Fitz]{Emmett Fitz}
\author[Ji]{Mattie Ji}
\author[Kilway]{Peter Kilway}
\author[Kim]{Hillary Kim}
\author[O'Neal]{Kolton O'Neal}
\author[Schuckman]{Jacob Schuckman}
\author[Tilton]{Scotty Tilton}
\begin{document}
\begin{abstract}
Let $\E_2$ be the Morava E-theory of height $2$ at the prime $2$. In this paper, we compute the homotopy groups of ${\E_2^{hC_6}}\wedge \R P^2$ and ${\E_2^{hC_6}} \wedge\R P^2 \wedge \C P^2$ using the homotopy fixed point spectral sequences.
\end{abstract} 

\maketitle

\setcounter{tocdepth}{1}
\renewcommand\contentsname{Table of Contents}
\tableofcontents

\section{Introduction}
One of the fundamental questions of algebraic topology is the computation of stable homotopy groups $\pi_k(\Sbb)$ of the sphere spectrum $\Sbb$ for $k > 0$. A classical theorem of Serre asserts that $\pi_k(\Sbb)$ is a finite abelian group for all $k > 0$, and thus the stable homotopy groups of spheres can be studied prime by prime. By the chromatic convergence theorem of Hopkins and Ravenel (\cite{chromatic_convergence}, Theorem 7.5.7) any $p$-local finite spectrum $X$ is the homotopy limit of the \emph{chromatic tower}:
\[\begin{tikzcd}
	X & {...} & {L_{n} X}& {...} & {L_{1} X} & {L_{0} X}
	\arrow[from=1-1, to=1-2]
	\arrow[from=1-2, to=1-3]
	\arrow[from=1-3, to=1-4]
	\arrow[from=1-4, to=1-5]
	\arrow[from=1-5, to=1-6]
\end{tikzcd},\]
where $L_n$ denotes the localization with respect to a wedge of Morava $K$-theories $\bigvee\limits_{i=0}^n K(i)$ at the prime $p$. 

Here, the connecting morphism from $L_{n} X$ to $L_{n-1} X$ is given by the \emph{chromatic fracture square}:
\[\begin{tikzcd}
L_{n} X \arrow[r] \arrow[d] 
\arrow[dr, phantom, "\usebox\pullback" , very near start, color=black]
& L_{K(n)} X \arrow[d] \\
L_{n-1} X \arrow[r] & L_{n-1} L_{K(n)} X \\
\end{tikzcd}.\]
Thus, to study the stable homotopy groups of spheres, it is important for us to understand the intermediate terms $L_{K(n)} \Sbb_{(p)}$ for the $p$-local sphere spectrum $\Sbb_{(p)}$ at all primes $p$. 

Let $\Sbb_n$ be the $n$-th Morava stabilizer group at $p$, $\Gbb_n$ be the $n$-th extended Morava stabilizer group at $p$, and $\E_n$ be the Morava E-theory of height $n$ at a prime $p$. A celebrated result of Devinatz and Hopkins \cite{DEVINATZ20041} showed that $L_{K(n)} \Sbb_{(p)} \simeq \E_n^{h\mathbb{G}_n}$ and there is a homotopy fixed point spectral sequence (HFPSS) with signature:
\[E_2^{*,*}: H_c^*(\Gbb_n, \pi_*(\E_n)) \Longrightarrow \pi_{*} (\E_n^{h\mathbb{G}_n}) \cong \pi_{*}(L_{K(n)} \mathbb{S}_{(p)}).\]
Furthermore, for any closed subgroup $G \subseteq \Gbb_n$, this spectral sequence descends down to a HFPSS:
\begin{equation}\label{eq:closed-subgroups}
E_2^{*,*}: H^*_c(G, \pi_*(\E_n)) \Longrightarrow \pi_{*}(\E_n^{hG}).
\end{equation}

This result  may be extended  from $\mathbb{S}$ to any finite complex. Let $X$ be a finite complex and $G$ be a closed subgroup of $\Gbb_n$. Then there exists a HFPSS with signature:
    \begin{equation}\label{eq:2}
    E^{*,*}_2 = H^*_c(G, (\E_n)_* X) \Longrightarrow \pi_{*}(\E_n^{hG} \wedge X).      
    \end{equation}

The HFPSS \eqref{eq:closed-subgroups} is a multiplicative spectral sequence, and there is a natural map of spectral sequences induced by the unit map $\E_n^{hG} \to \E_n^{hG} \wedge X$. This gives the HFPSS \eqref{eq:2} a module structure over \eqref{eq:closed-subgroups}. In general, this gives a multiplicative Leibnitz rule on the HFPSS for $\E_n^{hG}$ and a ``module'' Leibnitz rule on the HFPSS for $\E_n^{hG} \wedge X$.

Understanding $\E_n^{hG}$ for finite subgroups $G$ has been crucial in demystifying the structure of $\E_n^{h\mathbb{G}_n}$.
For example, at $n=2$ there exist resolutions that provide a decomposition of $\E_2^{h\mathbb{G}_2}$ in terms of $E_2^{hG}$ for various finite subgroups $G$ \cite{ghmr, BehrensQ, henn_res, henn_centr, Bobkova_2018}.

This provides a motivation for studying the spectra that appear in these resolutions. The focus of this paper is on the spectrum $\E_2^{hC_6}$ at the prime $2$. Our goal is to compute the $\E_2^{hC_6}$-homology of $\mathbb{R} P^2$ and $\mathbb{R} P^2\wedge \mathbb{C} P^2$, by determining the differentials and extensions in their respective homotopy fixed point spectral sequences.

 Let $V(0)$ be the cofiber of multiplication by 2 on $\Sbb$, and let $Y$ be the smash product of $V(0)$ with $C_{\eta}$, the cofiber of the stable Hopf map $\eta$. Then we have that 
\[ V(0) \simeq \Sigma^{-1}\Sigma^{\infty}\R P^2 \quad\text{and}\quad Y \simeq \Sigma^{-3}\Sigma^{\infty}(\R P^2 \wedge \C P^2).\]
Therefore, computing the $\E_2^{hC_6}$-homology of $\R P^2$ and $\R P^2 \wedge \C P^2$ is equivalent to computing the $\E_2^{hC_6}$-homology of $V(0)$ and $Y$. The goal of this paper is to completely compute the HFPSS for $\E_2^{hC_6} \wedge V(0)$ and $\E_2^{hC_6} \wedge Y$ in Sections \ref{sec:c6v0} and \ref{sec:c6y}. Along the way, as a preliminary step, we will also review the HFPSS for $\E_2^{hC_2}$, $\E_2^{hC_2} \wedge V(0)$ and $\E_2^{hC_6}$ in Sections \ref{sec:c2}, \ref{sec:c2v0} and \ref{sec:c6}.

\subsection*{Acknowledgements} This work arose from the 2024 electronic Computational Homotopy Theory (eCHT) Research Experience for Undergraduates, supported by NSF grant DMS-2135884. The authors thank Dan Isaksen for his support and leadership within the eCHT.  The first named author thanks Agnes Beaudry for inspiring this project and for the many related discussions over the years.

\section{The homotopy fixed point spectral sequence for \texorpdfstring{$\operatorname{E}_2^{hC_2}$}{E2hC2}}\label{sec:c2}

In the rest of the paper, we will write $\E= \E_2$  to denote the Morava E-theory of height $2$ at the prime $2$. The homotopy fixed point spectral sequence
\begin{equation}\label{eq:C2-SS}
E^{s,t}_r(\E^{hC_2}): H^s(C_2, \E_t) \Longrightarrow \pi_{t-s} \E^{hC_2} 
\end{equation} 
has been completely computed and studied for decades, see, for example, \cite{HeardStojanoska} and \cite{Hahn_Shi_2020} for more recent accounts. In this section, we will summarize the key results of this computation without proof as we establish notation that we will use throughout the paper. Note that we use the Adams notation for our spectral sequences: when we refer to elements in $E_r^{s, t}$, we mean elements in stem $t-s$ and filtration $s$ in the homotopy fixed point spectral sequence.

\begin{figure}
\centering
\includegraphics[width=0.65\textwidth]{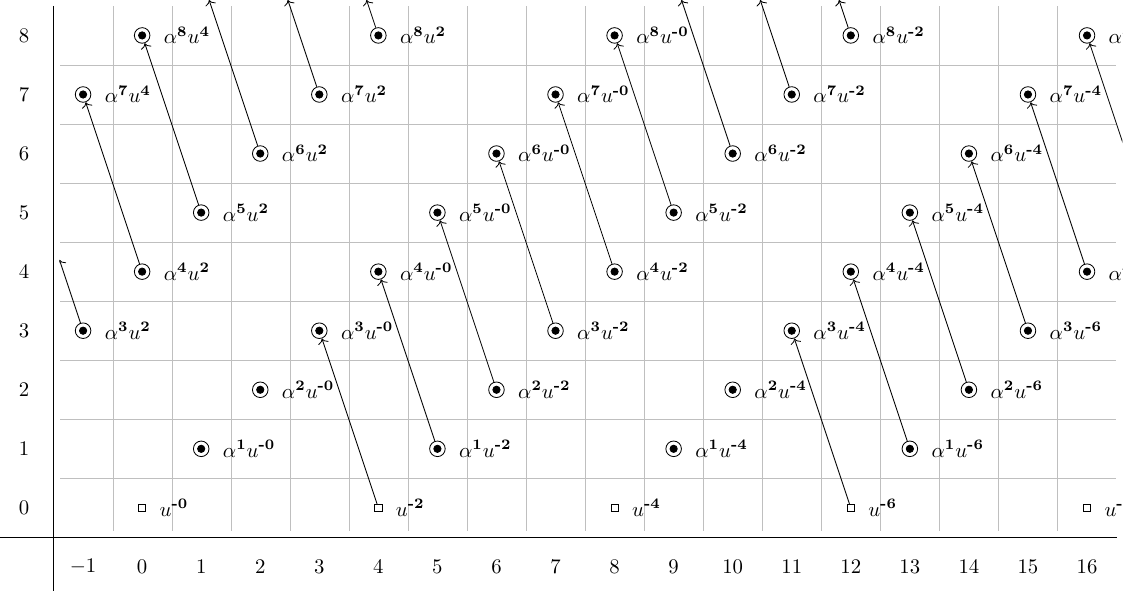}
\includegraphics[width=0.65\textwidth]{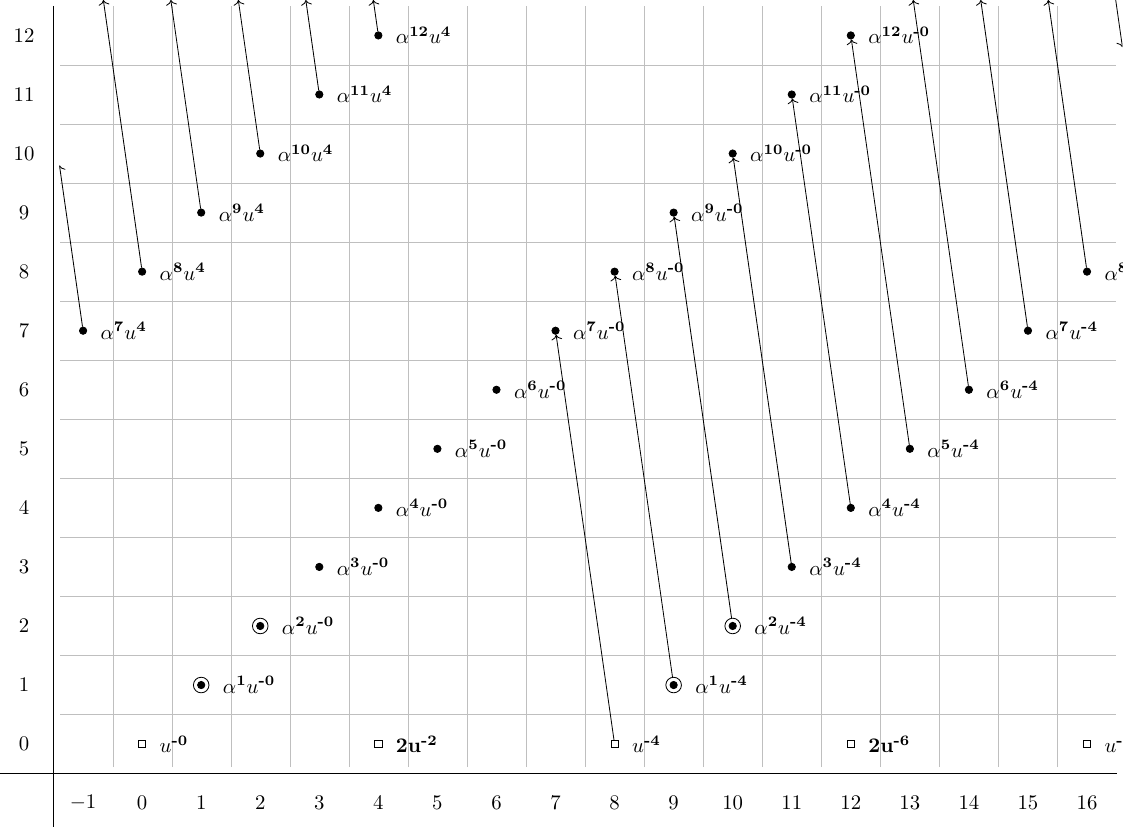}
\caption{The $E_3$ (top) and $E_7$ (bottom) page of the HFPSS for $\ctwo$. The notation is as follows: $\protect\obullet=\mathbb{F}_4\llbracket u_1\rrbracket$, $\bullet=\Fbb_4$, and 
   $\Box=\mathbb{W}\llbracket u_1 \rrbracket$.}
   \label{fig:c2_e3_e7_page}
\end{figure}

Let $\W = W(\Fbb_4)$ be the ring of Witt vectors for $\Fbb_4$. Recall that 
\[\E_* =  \W \llbracket  u_1\rrbracket  [u^{\pm 1}], \quad |u_1| = 0 \text{ and } |u|=-2\]
and $\mathbb{G}_2 \cong (W(\mathbb{F}_4)\langle S\rangle /Sa^{\sigma}=aS, S^2=2)^{\times}$, where $\sigma$ denotes the lift of the Frobenius morphism. 
The central subgroup $C_2 = \{\pm 1\} \subset \mathbb{G}_2$ acts trivially on $E_0 = \W\llbracket  u_1\rrbracket  $ and by multiplication
by $-1$ on $u$. With this action,
\[
E_2^{*,*} = H^* (C_2 , \E_* ) = \W\llbracket  u_1\rrbracket  [[u^2]^{\pm 1} , \alpha]/(2\alpha),\]
where $\alpha \in H^1(C_2 , \pi_2(\E))$  is the image of the generator of $H^1(C_2 , \Z[sgn])$ under the map
which sends the generator of the sign representation $\Z[sgn]$ to $u^{-1}$.

\begin{lem}\label{lem::c2_d3}
The $d_3$ differentials in \eqref{eq:C2-SS} are generated by
\[d_3(u^{-2}) = \alpha^3 u_1 \]  and 
linearity with respect to $\alpha$, $u_1$ and $u^{\pm 4}$.
\end{lem}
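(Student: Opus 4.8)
The plan is to use the multiplicative structure of \eqref{eq:C2-SS} to reduce the entire family of $d_3$ differentials to the single value $d_3(u^{-2})$. Since $E_2^{*,*} = \W\llbracket u_1\rrbracket[[u^2]^{\pm 1},\alpha]/(2\alpha)$ is generated as a $\W$-algebra by $u_1$, $\alpha$, and $u^{\pm 2}$, the Leibniz rule shows that $d_3$ is determined by its values on these generators. So the first task is to check that $u_1$, $\alpha$, and $u^{\pm 4}$ are $d_3$-cycles; this makes $d_3$ linear over the subring $\W\llbracket u_1\rrbracket[[u^4]^{\pm 1},\alpha]/(2\alpha)$ and leaves only $d_3(u^{-2})$ to be computed, exactly as in the statement.

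For the permanent cycles I would argue as follows. The class $\alpha$ lies in filtration $1$ and detects the image of $\eta$ under the unit map $\Sbb \to \ctwo$, so it supports no differential. The class $u_1 \in H^0(C_2,\E_0)$ lies on the zero-line, where $C_2$ acts trivially, and is detected by an element of $\pi_0\ctwo$; hence $d_3(u_1) = 0$ and $d_3$ is $\W\llbracket u_1\rrbracket$-linear. For $u^{\pm 4}$ I would avoid any lifting argument and instead use the Leibniz rule together with $2$-torsion: whatever $d_3(u^{\pm 2})$ turns out to be, it is divisible by $\alpha$ and hence annihilated by $2$ (because $2\alpha = 0$), so $d_3(u^{\pm 4}) = 2u^{\pm 2}\,d_3(u^{\pm 2}) = 0$. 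At this point the whole $d_3$ is pinned to the single value $d_3(u^{-2})$.

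The heart of the matter is the identity $d_3(u^{-2}) = \alpha^3 u_1$. For bidegree reasons the target must lie in $H^3(C_2,\pi_6\E)$, which is the free rank-one $\F_4[[u_1]]$-module on $\alpha^3$, so $d_3(u^{-2}) = c\,\alpha^3$ for some $c \in \W\llbracket u_1\rrbracket/2 = \F_4[[u_1]]$, and everything comes down to showing $c = u_1$ up to a unit. That $c$ is a \emph{non-unit} I would deduce from the survival of $\alpha^3$: by $\W\llbracket u_1\rrbracket$-linearity the $d_3$-boundaries in bidegree $(3,3)$ are precisely the ideal $(c)\cdot\alpha^3$, and since $\alpha^3$ persists to $E_\infty$ (it detects the nonzero class $\eta^3 \in \pi_3\ctwo$) we must have $\alpha^3 \notin (c)\alpha^3$, forcing $u_1 \mid c$. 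To upgrade this to $c = u_1$ — that is, to fix the $u_1$-adic valuation at exactly $1$ with unit leading coefficient — I would import the differential from the $C_2$-equivariant Real orientation of $\E$ of Hahn--Shi, under which this $d_3$ is the image of the first differential in the Real-bordism (slice) spectral sequence; tracing the standard generators through the reduction identifies the coefficient with $u_1$. Equivalently, one can read off the valuation from the known value of $\pi_3\ctwo$.

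The main obstacle is precisely this last determination of $c$. A naive comparison with the classical computation of $KO = KU^{hC_2}$, where $d_3(\beta^2) = \eta^3$ with a \emph{unit} coefficient, would wrongly predict that $c$ is a unit; the factor of $u_1$ is a genuinely height-$2$ feature, reflecting that $\alpha^3$ survives at the special fiber $u_1 = 0$. Obtaining the correct coefficient therefore requires height-$2$ input — the Real-orientation or slice-differential computation, or an explicit cocycle-level calculation in the cobar complex of $H^*(C_2,\E_*)$ — rather than any formal comparison, and this is exactly where the cited accounts \cite{HeardStojanoska, Hahn_Shi_2020} carry out the substantive work.
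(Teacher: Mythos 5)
The paper does not actually prove this lemma: Section~\ref{sec:c2} explicitly states the results for $\ctwo$ without proof, deferring to \cite{HeardStojanoska} and \cite{Hahn_Shi_2020}. Measured against the standard argument in those sources, your outline is the right one: reducing to algebra generators by the Leibniz rule, killing $d_3(u^{\pm 4})$ via $d_3(u^{\pm 4}) = 2u^{\pm 2}d_3(u^{\pm 2}) = 0$ (everything in positive cohomological degree is $2$-torsion), identifying the target group $H^3(C_2,\pi_6\E) = \alpha^3\,\Fbb_4\llbracket u_1\rrbracket$, and importing the coefficient $u_1$ from the Real-orientation/slice differential are all correct steps, and your closing observation that the factor of $u_1$ is a genuinely height-$2$ feature invisible to the $KO$ analogy is well taken.

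However, two of your detection claims are false, and both contradict statements made elsewhere in this paper. First, $\alpha$ does \emph{not} detect the image of $\eta$; that image is detected by $u_1\alpha$ (see the Remark following Lemma~\ref{lem:E2C2V0} and the Hurewicz classes $\eta = \alpha u_1$, $\nu = \alpha^3$ recorded in Section~\ref{sec:c6}). Indeed, if $\eta$ were detected by $\alpha$, then since $\alpha^4 \neq 0$ on $E_\infty^{4,4}$ the image of $\eta^4$ in $\pi_4\ctwo$ would be nonzero, contradicting $\eta^4 = 0$ in $\pi_4\Sbb$. Second, and at a load-bearing point, you conclude that $c$ is a non-unit because ``$\alpha^3$ detects the nonzero class $\eta^3$'': the image of $\eta^3 = 12\nu$ in the $2$-torsion group $\pi_3\ctwo$ is zero, and $\alpha^3$ survives because it detects $\nu$. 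The conclusions you need ($d_3(\alpha)=0$ and the survival of $\alpha^3$) are both true, so the argument is repairable --- use the permanent cycles $u_1\alpha$ and $\alpha^3$, which detect $\eta$ and $\nu$, and then deduce $d_3(\alpha)=0$ from $u_1 d_3(\alpha) = d_3(u_1\alpha) = 0$ together with injectivity of multiplication by $u_1$ --- but as written the justifications are wrong. Relatedly, your argument that $d_3(u_1)=0$ because $u_1$ ``is detected by an element of $\pi_0\ctwo$'' is circular: being detected is exactly the assertion that $u_1$ is a permanent cycle, and the potential target $H^3(C_2,\pi_2\E) = \alpha^3 u^2\,\Fbb_4\llbracket u_1\rrbracket$ is nonzero, so this too requires the external input (the comparison with the slice spectral sequence, or the known value of $\pi_{-1}\ctwo$) that you invoke only for the final coefficient.
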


\begin{lem}\label{lem::c2_d7}
The $d_7$ differentials in \eqref{eq:C2-SS}  are generated by
\[d_7(u^{-4}) = \alpha^7 \] and
linearity with respect to $\alpha$ and $u^{\pm 8}$. 
\end{lem}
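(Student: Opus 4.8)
The plan is to exploit the multiplicative (Leibniz) structure of the spectral sequence together with the already determined $d_3$ of \cref{lem::c2_d3} and the structural inputs recorded above, namely that $d_4 = d_5 = d_6 = 0$ and $E_8 = E_\infty$. Since $d_7$ is a derivation on the ring $E_7^{*,*}$, it suffices to evaluate it on a set of algebra generators. First I would dispose of the generators that are permanent cycles. Both $\alpha$ and $u_1$ survive to $E_8 = E_\infty$ (they are exactly the classes $\alpha$ and ``$x$'' visible in the labeled $E_8$-page), with $\alpha$ detecting the image of $\eta$ in $\pi_1\E^{hC_2}$ and $u_1$ detecting $v_1$ from $\pi_0$; hence $d_7(\alpha) = 0$ and $d_7(u_1) = 0$. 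Granting the claim $d_7(u^{-4}) = \alpha^7$, the class $u^{\pm 8}$ is then automatically a $d_7$-cycle, since $d_7(u^{-8}) = 2u^{-4}\alpha^7 = 0$ because $2\alpha = 0$. Thus the whole of $d_7$ is governed by its value on the single generator $u^{-4}$, and the lemma reduces to the assertion $d_7(u^{-4}) = \alpha^7$.

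Next I would pass to $d_3$-homology to understand $E_4 = E_7$ in the relevant bidegrees. From $d_3(u^{-2}) = \alpha^3 u_1$ and the Leibniz rule one checks that $u^{-4} = (u^{-2})^2$ is a $d_3$-cycle, as $d_3(u^{-4}) = 2u^{-2}\alpha^3 u_1 = 0$, and that it is not a $d_3$-boundary. One also checks that every pure power $\alpha^k$ with $k \ge 0$ survives to $E_7$: the only $d_3$ whose target lies in the bidegree of $\alpha^k$ (stem $k$, filtration $k$) comes from the filtration $k-3$, stem $k+1$ classes $\alpha^{k-3}u^{-2}u_1^j$, and $d_3(\alpha^{k-3}u^{-2}u_1^j) = \alpha^k u_1^{j+1}$ always carries a factor of $u_1$, so it misses the $u_1$-free class $\alpha^k$. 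Consequently the $\alpha$-power tower $\{\alpha^k\}_{k\ge 0}$ is infinite on the $E_7$-page.

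The core of the argument is then a forcing step. These homotopy fixed point spectral sequences for a finite subgroup of $\Gbb_n$ acting on Lubin--Tate theory possess a horizontal vanishing line at $E_\infty$; equivalently, $\pi_*\E^{hC_2}$ has bounded filtration in each stem. Therefore the infinite tower $\{\alpha^k\}$ cannot survive and must be truncated by a differential, and since $E_8 = E_\infty$ and $d_4 = d_5 = d_6 = 0$, the only available differential is $d_7$. A bidegree count identifies the source: a $d_7$ hitting $\alpha^k$ must originate in stem $k+1$ and filtration $k-7$, and the unique $d_3$-surviving class of stem $8$ and filtration $0$ (modulo the permanent cycle $u_1$) is $u^{-4}$, so the source is $\alpha^{k-7}u^{-4}$. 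Hence $d_7(u^{-4}) = c\,\alpha^7$ for some unit $c$; as $\alpha^7$ is $2$-torsion the coefficient lies in $\W/2$, and after normalizing the generator we may take $c = 1$, giving $d_7(u^{-4}) = \alpha^7$. The truncation then leaves precisely $1, \alpha, \dots, \alpha^6$, matching the $E_8$-page.

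I expect the main obstacle to be this forcing step, and specifically the justification that $\alpha^7$ is genuinely not a permanent cycle. Knowing only that the spectral sequence is multiplicative and that some $d_7$ exists is not sufficient: one needs the horizontal vanishing line (or, equivalently, an independent identification of $\pi_7\E^{hC_2}$ as having no class in filtration $7$) to guarantee the tower is truncated, together with the permanence of $\alpha$ and $u_1$ to conclude that $u^{-4}$ is the \emph{only} generator carrying the differential. If one prefers to avoid invoking the vanishing line directly, the alternative is to transport the $d_7$ along comparison maps from a known model of $\E^{hC_2}$, where $d_7(u^{-4}) = \alpha^7$ appears as established input; see \cite{HeardStojanoska} and \cite{Hahn_Shi_2020}. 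The remaining completeness check, that $d_7(u^{-4}) = \alpha^7$ together with linearity over $\alpha$ and $u^{\pm 8}$ recovers every $d_7$ (so that, for instance, $d_7(u^4) = u^8\alpha^7$ is forced), is routine bidegree bookkeeping on the generators $u_1, \alpha, u^{\pm 4}$ of the $E_7$-algebra.
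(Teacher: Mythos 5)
The paper does not prove this lemma at all: Section~\ref{sec:c2} explicitly summarizes the known computation of \eqref{eq:C2-SS} ``without proof,'' deferring to \cite{HeardStojanoska} and \cite{Hahn_Shi_2020}. So your proposal cannot be compared against an in-paper argument; it has to stand on its own. As a standalone sketch it follows the standard forcing strategy and its skeleton is sound: $\alpha$ and $u_1$ are permanent cycles, $u^{-4}$ generates the surviving $\W\llbracket u_1\rrbracket$ in bidegree $(8,0)$ of $E_7$, the tower $\{\alpha^k\}$ survives $d_3$ because the $d_3$-image in those bidegrees is $u_1$-divisible, and once $d_7(u^{-4})=\alpha^7$ is known the Leibniz rule (with $2\alpha=0$) gives $d_7(u^{\pm 8})=0$ and forces all remaining values. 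I would also point out that for the class $\alpha^7$ you do not even need the input $E_8=E_\infty$, which is mildly circular here (it is normally deduced \emph{after} the $d_7$'s are known): any $d_r$ hitting $(7,7)$ with $r>7$ would originate in negative filtration, and $d_3$, $d_5$ are excluded by the $u_1$-divisibility and parity/sparseness observations you already make, so $d_7$ is the only option unconditionally.

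There are, however, two genuine gaps. First, the horizontal vanishing line at $E_\infty$ is the load-bearing hypothesis of the whole argument --- without it nothing forces $\alpha^7$ to die --- and you assert it without proof or reference. It is true for finite subgroups of $\Gbb_n$ acting on Lubin--Tate theory, but it is a theorem (provable via the faithful finite Galois descent results of Devinatz--Hopkins and the nilpotence/thick-subcategory arguments of Mathew, or by exhibiting a finite complex on which the action is cohomologically free), and it must be cited or proved; as written your argument is only as strong as this unproved input. Second, the coefficient: you conclude $d_7(u^{-4})=c\,\alpha^7$ for some $c\in\Fbb_4^\times$ and then say ``after normalizing the generator we may take $c=1$.'' That normalization is not available --- both $u$ and $\alpha$ are canonically defined (by $\E_*$ and by the generator of $H^1(C_2,\Z[sgn])$ respectively), so $c$ is a well-defined unit that must be computed, not chosen. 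One can pin it down, e.g., by equivariance with respect to the $\Fbb_4^\times$-action \eqref{eq:C3action} together with its semilinearity over $\Fbb_4$, or by comparison with a reference computation, but some such argument is required. Your fallback of importing the differential from \cite{HeardStojanoska} or \cite{Hahn_Shi_2020} is exactly what the paper itself does, and is the cleanest way to close both gaps.
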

We have $E^{*,*}_8(\E^{hC_2}) = E^{*,*}_{\infty}(\E^{hC_2})$, displayed in \Cref{fig:c2_einf_page}.

\begin{figure}
\includegraphics[width=0.8\linewidth]{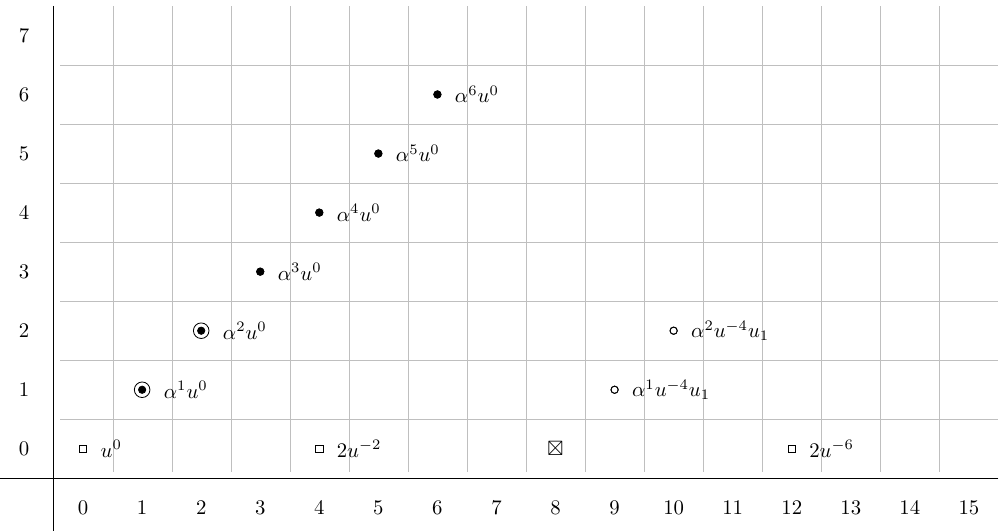}
    \caption{The $E_8 = E_{\infty}$ page of the HFPSS for $\ctwo$. 
     The notation is as follows: $\protect\obullet=\mathbb{F}_4\llbracket u_1\rrbracket$, $\bullet=\Fbb_4$, $\circ=u_1 \mathbb{F}_4\llbracket u_1 \rrbracket$, 
   $\Box=\mathbb{W}\llbracket u_1 \rrbracket$, and the $\boxtimes$ at $(8,0)$ denotes $2u^{-4}W\oplus u^{-4}u_1W\llbracket u_1\rrbracket$. The homotopy groups are 16-periodic, with periodicity generator $u^{-8}$.}
    \label{fig:c2_einf_page}
\end{figure}

\section{The homotopy fixed point spectral sequence for \texorpdfstring{$\E_2^{hC_2} \wedge V(0)$}{E2hC2 sm V(0)}}\label{sec:c2v0}
In this section, we will compute the spectral sequence
\begin{equation}\label{eq:C2-V0-SS}
E^{s,t}_r(\E^{hC_2} \wedge V(0)): H^s(C_2, \pi_t(E\wedge V(0))) \Longrightarrow \pi_{t-s} \E^{hC_2} \wedge V(0) 
\end{equation} 
Our starting point is the  fiber sequence of spectra
\begin{equation}\label{eq:fiber-seq-V(0)}
   \E \xrightarrow{2} \E\xrightarrow{i} \E\wedge V(0) \xrightarrow{p} \Sigma \E.
\end{equation}
The map $i$ is the inclusion of the bottom cell of $V(0)$, and the map $p$ is the projection to the top cell. 

The fiber sequence \eqref{eq:fiber-seq-V(0)} induces a short exact sequence of homotopy groups
\begin{equation}\label{eq:ses-1}
 0 \rightarrow \pi_t \E\xrightarrow{2}\pi_t \E\rightarrow\pi_t(\E\wedge V(0)) \cong \pi_t E /2 \rightarrow 0
\end{equation}
for any $t$ (if $t$ is odd, every term in this sequence is zero). Note also that we have \[\pi_*(\E\wedge V(0)) = \E_*/2 \cong \F_4\llbracket u_1 \rrbracket[u^{\pm 1}].\] 
 
 The short exact sequence \eqref{eq:ses-1} induces a long exact sequence in group cohomology 
 \begin{equation}\label{eq:1}
\cdots\to H^s(C_2, \pi_t \E)\xrightarrow{2} H^s(C_2, \pi_t \E)\xrightarrow{i} H^s(C_2,\pi_t \E/2)\xrightarrow{p} H^{s+1}(C_2, \pi_t \E) \to \cdots 
 \end{equation}

\begin{lem}[Section 2.2 of \cite{1a7d99963b1048d8ad18fe824397caab}]\label{lem:E2C2V0}
The $E_2$-page of the HFPSS for $\ctwo \wedge V(0)$ is
\[ E_2^{*,*}(\E^{hC_2}\wedge V(0))=H^* (C_2 , \E_*/2) 
= \F_4 \llbracket u_1 \rrbracket[u^{\pm 1} ][\alpha].\]
 Note that $H^* (C_2 , E_* /2)$ is a module over $H^* (C_2 , E_*)$
and
$\eta = \alpha u_1$.
\end{lem}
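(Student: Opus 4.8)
The plan is to compute the group cohomology $H^*(C_2, \E_*/2)$ directly, exploiting the fact that the $C_2$-action becomes trivial after reducing mod $2$, and then to pin down the multiplicative generator using the long exact sequence \eqref{eq:1}. That the $E_2$-page equals $H^*(C_2,\E_*/2)$ is the general form of the HFPSS recalled in the introduction, so the content is entirely the cohomology computation.

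First I would record that the central $C_2$ acts on $\E_* = \W\llbracket u_1\rrbracket[u^{\pm 1}]$ by fixing $\W\llbracket u_1\rrbracket$ and sending $u \mapsto -u$, so that a generator $\sigma$ acts by $\sigma(u^k) = (-1)^k u^k$. Since $-1 \equiv 1 \pmod 2$, this action is \emph{trivial} on $\E_*/2 \cong \F_4\llbracket u_1\rrbracket[u^{\pm 1}]$. Computing with the standard $2$-periodic free resolution of $\Z$ over $\Z[C_2]$ and applying $\operatorname{Hom}_{\Z[C_2]}(-,\E_*/2)$, the differentials become alternately multiplication by $\sigma - 1 = 0$ and by the norm $1 + \sigma = 2 = 0$ (as $\E_*/2$ is an $\F_2$-vector space). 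Hence $H^s(C_2, \E_*/2) \cong \E_*/2$ for every $s \geq 0$, and the ring structure is that of $H^*(C_2, \F_2) \otimes_{\F_2} (\E_*/2) = (\E_*/2)[h]$, where $h \in H^1(C_2, E_0/2)$ is the image of the polynomial generator of $H^*(C_2, \F_2) = \F_2[h]$. This already gives $E_2^{*,*}(\E^{hC_2}\wedge V(0)) = \F_4\llbracket u_1\rrbracket[u^{\pm 1}][h]$.

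It remains to identify $h$ with $\alpha u$. The module structure over $H^*(C_2, \E_*)$ comes from the $\E_*$-module structure on $\E_*/2$, and concretely $a \cdot m = i(a) \cup m$, where $i$ is the reduction map appearing in \eqref{eq:1}. I would extract from \eqref{eq:1} that multiplication by $2$ vanishes on $H^s(C_2, \E_*)$ for $s \geq 1$ (that part is generated by $\alpha$ and $2\alpha = 0$), so $i$ is injective on $H^1$ and $i(\alpha) \neq 0$; moreover a degree count shows $H^2(C_2, \E_*)$ is concentrated away from internal degree $t = 2$, whence \eqref{eq:1} forces $i$ to restrict to an isomorphism $H^1(C_2, E_2) \xrightarrow{\cong} H^1(C_2, E_2/2)$. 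Therefore $i(\alpha)$ generates the free rank-one $\F_4\llbracket u_1\rrbracket$-module $H^1(C_2, E_2/2)$; multiplying by the unit $u$ carries this generator to a generator of $H^1(C_2, E_0/2)$, so we may take $h = \alpha u$, and multiplicativity then gives $h^k = \alpha^k u^k$.

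The main obstacle, and really the only subtle point, is the bookkeeping in the last step: one must track the internal degree $t$ carefully while passing through \eqref{eq:1} to be sure that $i(\alpha)$ is not merely nonzero but an honest module generator, and that no extension problem in the long exact sequence obstructs this identification. Everything else reduces to the single observation that the sign action is invisible mod $2$, which collapses the computation to the cohomology of $C_2$ with trivial $\F_2$-coefficients.
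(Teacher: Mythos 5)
Your argument is correct, and it is worth noting that the paper itself offers no proof of this lemma at all: it is quoted verbatim from Section 2.2 of the cited reference, so any comparison is with the literature rather than with an argument in the text. Your route is the standard one and is sound: since the central $C_2$ acts on $u$ by $-1$ and $-1\equiv 1 \pmod 2$, the action on $\E_*/2\cong\F_4\llbracket u_1\rrbracket[u^{\pm 1}]$ is trivial, and the periodic resolution gives $H^*(C_2,\E_*/2)=(\E_*/2)[h]$ because both $\sigma-1$ and the norm $1+\sigma=2$ act by zero. Your identification of $h$ with $\alpha u$ via the Bockstein long exact sequence \eqref{eq:1} is also valid: positive-degree classes in $H^*(C_2,\E_*)$ are $\alpha$-multiples and hence $2$-torsion, and $H^2(C_2,E_2)=0$ since $[u^2]^k\alpha^2$ has internal degree $4-4k\neq 2$, so $i\colon H^1(C_2,E_2)\to H^1(C_2,E_2/2)$ is an isomorphism. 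The one point where your argument is slightly weaker than what is claimed is that it identifies $\alpha u$ with $h$ only up to a unit of $\F_4\llbracket u_1\rrbracket$, since a module generator is only determined up to such a unit. To get the equality on the nose, recall that $\alpha$ is by definition represented by the crossed homomorphism $\sigma\mapsto u^{-1}$; reducing mod $2$ and multiplying by the unit $u$ yields the cocycle $\sigma\mapsto 1$, which is exactly the canonical generator $h$ of $H^1(C_2,E_0/2)$ coming from $H^1(C_2,\F_2)$. With that one sentence added, your proof is complete and self-contained, which is arguably an improvement over the paper's bare citation.
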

\begin{remark}
Let $v_1 \in \pi_2(V(0))$ be an element which maps to $\eta \in \pi_1(\Sbb)$ under the map $p$ and consider the fate of $v_1$ in the commutative diagram, where the vertical maps are unit maps of the ring spectrum $\ctwo$:
$$
\xymatrix{
\ldots \ar[r] &\pi_2(\Sbb) \ar[d] \ar[r]^i &\pi_2(V(0)) \ar[d] \ar[r]^{p} &\pi_1(\Sbb) \ar[d] \ar[r]^{2} &\ldots\\
\ldots \ar[r] &\pi_2(\E^{hC_2}) \ar[r]^-{i} &\pi_2(\ctwo \wedge V(0)) \ar[r]^-{p} &\pi_1(\ctwo) \ar[r]^{2} &\ldots
}$$
Since $\eta \in \pi_1(\ctwo)$ is an element of order 2 and is detected by $u_1\alpha = u_1 u^{-1}h$, we have that $v_1\in \pi_2(\ctwo \wedge V(0))$ is detected by $u_1 u^{-1}$. 

\end{remark}

\begin{figure}[h]
    \centering
    \includegraphics[width=0.7\linewidth]{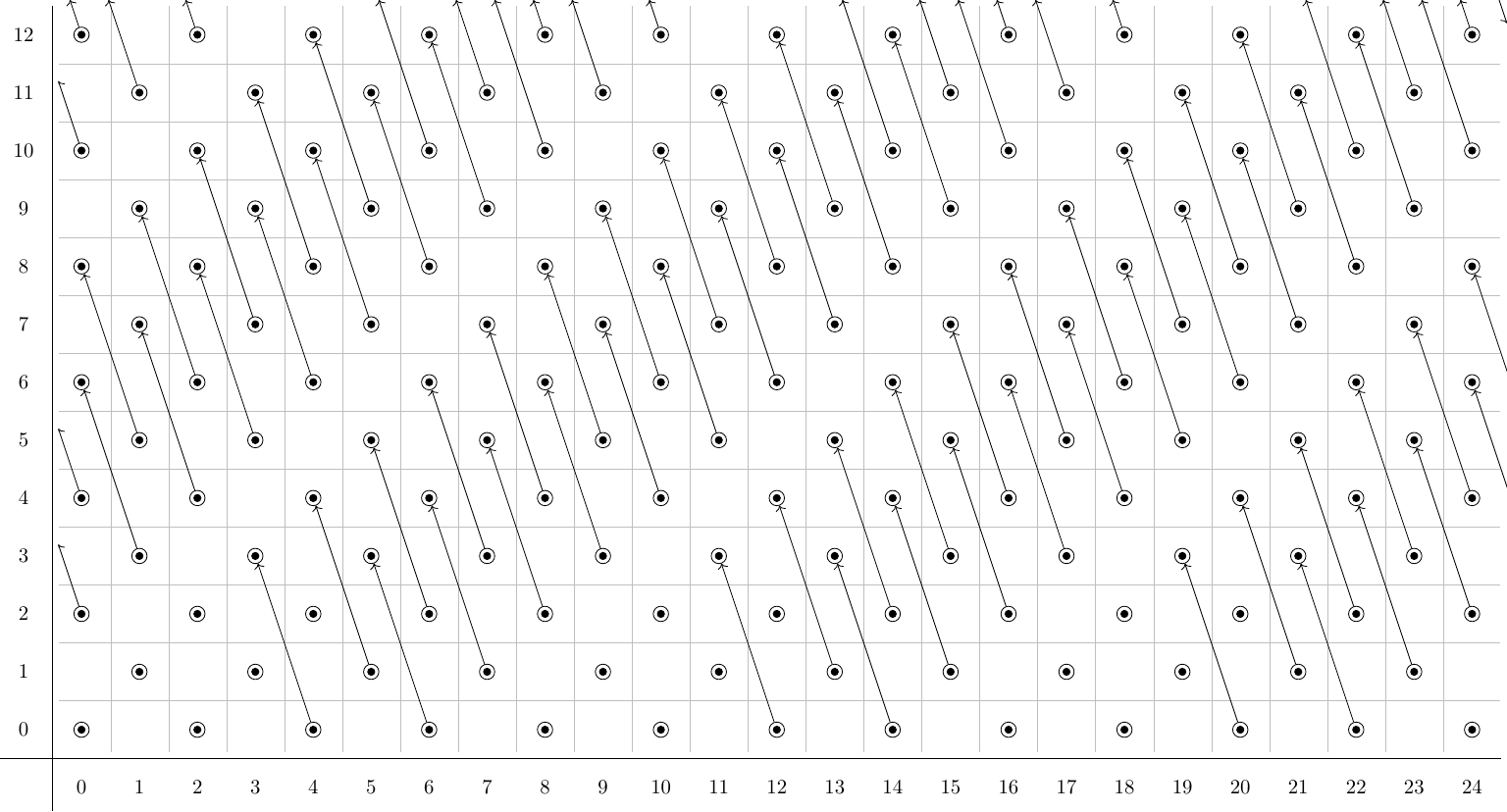}
    \caption{The $E_3$ page of the HFPSS for $\ctwo \wedge V(0)$. The symbol $\protect\obullet$ represents $\mathbb{F}_4\llbracket u_1\rrbracket$.}
    \label{fig:E3_page}
\end{figure}

\subsection{The \texorpdfstring{$d_3$}{d3}-differentials}

\begin{lem}\label{lem:d3c2v0}
The $d_3$ differentials in the homotopy fixed point spectral sequence \eqref{eq:C2-V0-SS} for $\ctwo \wedge V(0)$ are generated by
\begin{align*}
    d_3(u^{-2})&=\alpha^3 u_1,\\
    d_3(u^{-3})&=\alpha^3 u^{-1} u_1,
\end{align*}
and linearity with respect to 
$\alpha$, $u_1$ and $u^{\pm 4}$. 
\end{lem}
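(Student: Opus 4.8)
The plan is to exploit the two maps of spectral sequences $i$ and $p$ coming from the fiber sequence \eqref{eq:fiber-seq-V(0)}, together with the fact that the $V(0)$-spectral sequence is a \emph{module} over the one for $\ctwo$. Since $\E\wedge V(0)$ is not a ring spectrum, the $d_3$ in \eqref{eq:C2-V0-SS} is only a derivation for the module action of $E_*^{*,*}(\ctwo)$, not a ring derivation. Viewed as a module over the coefficient ring $\F_4\llbracket u_1\rrbracket[\alpha,u^{\pm 2}]$, the $E_2$-page $\F_4\llbracket u_1\rrbracket[u^{\pm 1}][h]$ is generated by $1$ and the odd class $u$: using $h=\alpha u$, every monomial $u_1^a u^b h^j$ lies in the coefficient ring times $1$ or times $u$. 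Hence it suffices to compute $d_3$ on these two generators, after which everything follows from the module Leibniz rule and $\alpha$-, $u_1$-, $u^{\pm 4}$-linearity.

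First I would treat the even powers via naturality of $i$. Since $i$ commutes with differentials, $i(u^{-2})=u^{-2}$, and $d_3(u^{-2})=\alpha^3 u_1$ in the $\ctwo$-spectral sequence by Lemma~\ref{lem::c2_d3}, we obtain $d_3(u^{-2})=\alpha^3 u_1$ in \eqref{eq:C2-V0-SS} as well; the image $i(\alpha^3 u_1)=\alpha^3 u_1$ is nonzero in $\F_4\llbracket u_1\rrbracket[u^{\pm 1}][h]$. This determines $d_3$ on the submodule generated by $1$.

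The main step is the odd generator, where I claim $d_3(u^{-1})=0$. This is the crux, because $u^{-1}$ is a genuinely new class not in the image of $i$, so the $\ctwo$-computation does not see it directly; I would instead use the top-cell map $p$. The instance of the long exact sequence \eqref{eq:1} in internal degree $t=2$ identifies the connecting map $p\colon H^0(C_2,\E_2/2)\xrightarrow{\ \cong\ }H^1(C_2,\E_2)$ as an isomorphism: the group $H^0(C_2,\E_2)$ vanishes because $u^{-1}$ is anti-invariant, and $H^1(C_2,\E_2)$ is all $2$-torsion. Thus $p(u^{-1})=\alpha$. Since $p$ commutes with $d_3$ and $d_3(\alpha)=0$ in the $\ctwo$-spectral sequence, we get $p(d_3(u^{-1}))=d_3(\alpha)=0$. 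It remains to check that $p$ is injective on the target bidegree: as there are no $d_2$-differentials we have $E_3=E_2$, so $d_3(u^{-1})$ lives in $E_2^{1,3}(\ctwo\wedge V(0))=H^3(C_2,\E_4/2)$, and the long exact sequence in degree $t=4$ shows the connecting map $p\colon H^3(C_2,\E_4/2)\to H^4(C_2,\E_4)$ is injective because $H^3(C_2,\E_4)=0$ (no $\alpha$-tower class occupies that bidegree). Injectivity forces $d_3(u^{-1})=0$.

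Finally I would assemble the statement. Writing $u^{-3}=u^{-2}\cdot u^{-1}$ with $u^{-2}$ in the coefficient ring, the module Leibniz rule gives
\[ d_3(u^{-3}) = d_3(u^{-2})\,u^{-1} + u^{-2}\,d_3(u^{-1}) = \alpha^3 u_1\,u^{-1} + 0 = \alpha^3 u^{-1}u_1. \]
Linearity with respect to $\alpha$, $u_1$, and $u^{\pm 4}$ then propagates these two values to all of $E_3$, with $d_3(1)=0$ and $d_3(u^{-1})=0$ filling in the remaining generators modulo $u^{\pm 4}$-periodicity. I expect the vanishing $d_3(u^{-1})=0$ to be the only genuine obstacle: it hinges on the bidegree-specific injectivity of the top-cell map $p$, whereas all the remaining differentials are formal consequences of the module structure once $d_3(u^{-2})$ and $d_3(u^{-1})$ are in hand.
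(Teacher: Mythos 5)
Your proposal is correct, but it reaches the second differential by a genuinely different route than the paper. Both arguments obtain $d_3(u^{-2})=\alpha^3 u_1$ from Lemma~\ref{lem::c2_d3} via the bottom-cell map $i$ and propagate by the module structure over $E_r(\E^{hC_2})$. For $d_3(u^{-3})$, however, the paper quotes the external input $d_3(v_1^3)=v_1\eta^3$ from Lemma~4.1 of \cite{Beaudry_2022} and divides by $u_1^3$ (legitimate because the relevant groups are free $\F_4\llbracket u_1\rrbracket$-modules), whereas you reduce everything to the single new module generator $u^{-1}$ and prove $d_3(u^{-1})=0$ using the top-cell map $p\colon E_r^{w,s}(\ctwo\wedge V(0))\to E_r^{w-1,s+1}(\ctwo)$: the long exact sequence \eqref{eq:1} in internal degree $2$ shows $p(u^{-1})=\alpha$ since $H^0(C_2,\E_2)=0$ and $H^1(C_2,\E_2)$ is $2$-torsion, and $p$ is injective on the target bidegree because $H^3(C_2,\E_4)=0$; then the module Leibniz rule gives $d_3(u^{-3})=d_3(u^{-2})\cdot u^{-1}=\alpha^3u^{-1}u_1$. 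This is essentially the Geometric Boundary Theorem argument that the paper reserves for its $d_7$-differentials, applied one page earlier. Your version buys self-containedness (no outside citation) and explicitly records the vanishing $d_3(u^{-1})=0$, which the lemma's ``generated by'' phrasing implicitly requires but the paper's proof never addresses (it can also be recovered from the paper's observation that $v_1=u_1u^{-1}$ is a permanent cycle, by $u_1$-torsion-freeness). The one point you should make explicit is that the filtration-shifting map $p$ commutes with $d_3$ --- that statement is exactly the Geometric Boundary Theorem, which the paper already invokes elsewhere, so the step is justified.
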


\begin{proof}
The first differential and linearity with respect to $u_1$, $\alpha$ and $u^{\pm 4}$ follow from  \Cref{lem::c2_d3} and the fact that this spectral sequence is a module over $E_r(\E^{hC_2})$.
Next we note that $d_3(v_1^3)=v_1\eta^3$ in the Adams--Novikov spectral sequence for $\pi_*V(0)$ (see, for example, \cite[Thm. 5.13(a)]{ravnovice}). Then we have
\[
d_3(v_1^3)=d_3(u^{-3}u_1^3)=u_1^3 d_3(u^{-3})=v_1 \eta^3= u_1 u^{-1} \alpha^3 u_1^3,
\]
implying $d_3(u^{-3})=\alpha^3u^{-1} u_1$. 
\end{proof}

All $d_3$ differentials are injective, so the sources of these differentials vanish on the $E_4$-page. The differentials are not surjective, and their cokernels are copies of $\mathbb{F}_4$ generated by powers of $\alpha$. This can be seen in \Cref{fig:E7_page}.

\begin{figure}[h]
\centering
\includegraphics[width=0.8\linewidth]{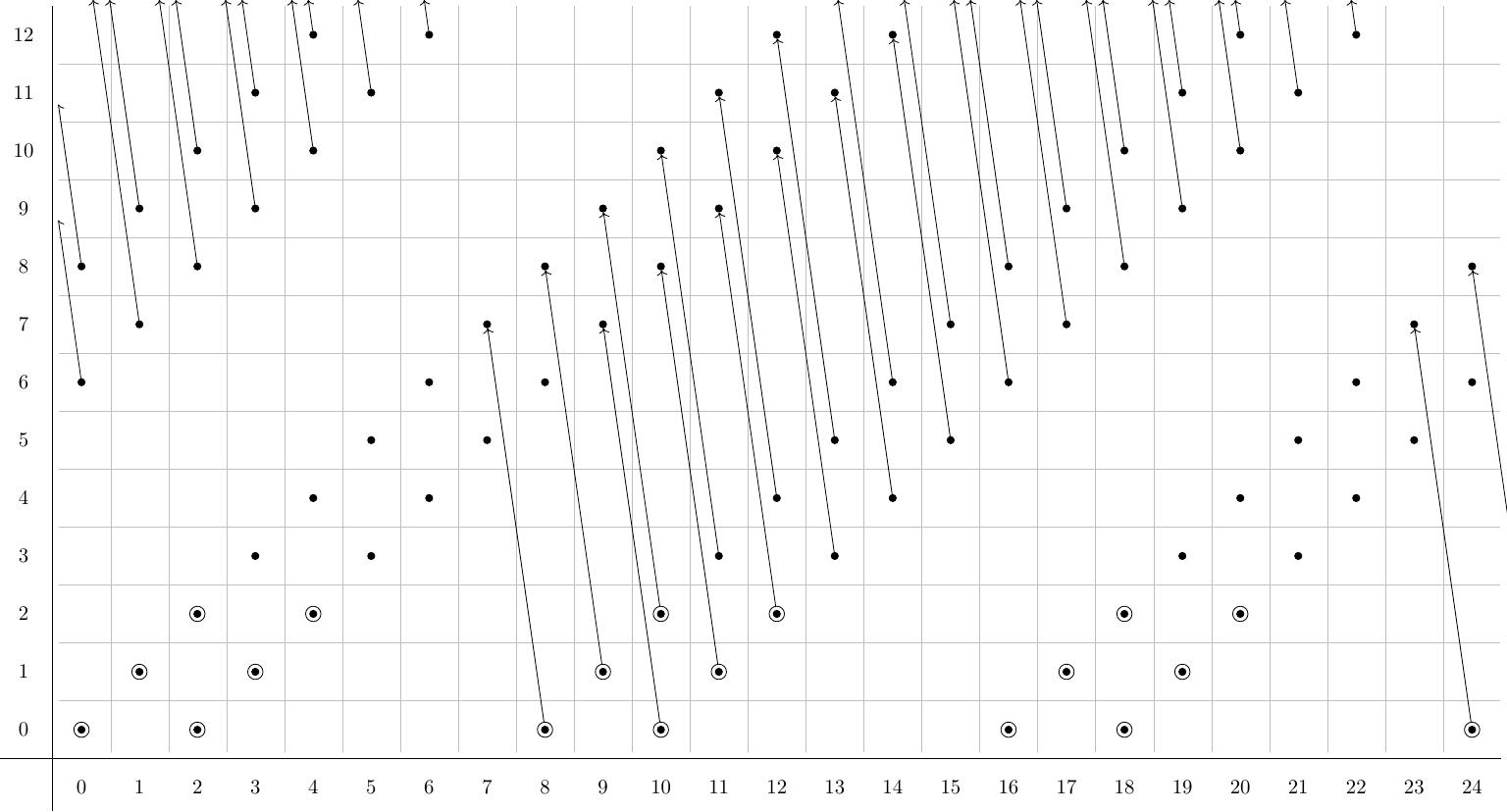}
\caption{The $E_7$ page of the HFPSS for $\ctwo \wedge V(0)$. The symbol $\bullet$ represents $\Fbb_4$, and $\protect\obullet$ represents $\mathbb{F}_4\llbracket u_1\rrbracket$.}
\label{fig:E7_page}
\end{figure}

\subsection{The \texorpdfstring{$E_\infty$}{E infinity} page}

Due to sparseness, there are no possible $d_{r}$ differentials for even $r$. There are possible $d_5$ differentials, but they are trivial.
\begin{lem}\label{lem::c2_d5}
There are no nontrivial $d_5$ differentials in the spectral sequence \eqref{eq:C2-V0-SS}. 
\end{lem}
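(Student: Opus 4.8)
The plan is to rule out $d_5$ differentials by a sparseness-and-naturality argument. First I would pin down exactly what the $E_5$-page looks like. Since $E_4 = E_5$, $E_6 = E_7$ (no even differentials by sparseness, as noted just before the lemma), the relevant page is $E_5 = E_4$. By Lemma~\ref{lem:d3c2v0}, after the $d_3$ differentials the surviving classes are: the permanent-cycle towers $\F_4\llbracket u_1\rrbracket$-classes in filtration $s=0,1,2$ (which are $d_3$-cycles that were not hit), together with the cokernel classes that are copies of $\F_4$ generated by powers of $\alpha$ sitting in higher filtration, as described in the paragraph following Lemma~\ref{lem:d3c2v0} and depicted in Figure~\ref{fig:E7_page}. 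The key structural input is that a $d_5$ raises filtration by $5$ and lowers stem by $1$, i.e. $d_5\colon E_5^{w,s} \to E_5^{w-1,s+5}$.

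Next I would carry out the actual exclusion in two steps. First, I would observe that the $\F_4\llbracket u_1\rrbracket$-towers in filtrations $s=0,1,2$ consist of permanent cycles: these are the classes detecting the homotopy of $\ctwo \wedge V(0)$ that come, via the map $i$ from the HFPSS for $\ctwo$, from permanent cycles there; by naturality of $i$ they support no differentials. So any hypothetical $d_5$ must originate from one of the surviving $\alpha$-power classes of higher filtration. Second, I would examine the bidegrees of those $\alpha$-power classes and their potential targets. The surviving $\F_4$'s generated by $\alpha^k$ (cokernels of the $d_3$'s) occupy a restricted set of $(w,s)$ positions, and I would check that applying the $d_5$ bidegree shift $(w,s)\mapsto(w-1,s+5)$ lands either in a position that is already zero on $E_5$ or in a position occupied only by classes that are themselves targets of the $d_7$ differentials (which, by Lemmas~\ref{lem::c2_d3} and the forthcoming $d_7$ analysis, are the nontrivial differentials that must survive to the $E_7$-page). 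In other words, there is simply no room: the source–target pairs required for a nonzero $d_5$ do not both exist on $E_5$.

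The cleanest way to package this is through the module structure over $E_r(\ctwo)$ combined with naturality under $i$ and $p$. Since the HFPSS for $\ctwo \wedge V(0)$ is a module over that for $\ctwo$, and since the $d_5$ in the HFPSS for $\ctwo$ vanishes (only $d_3$ and $d_7$ are nonzero there, per the statement preceding Lemma~\ref{lem::c2_d3}), the module Leibniz rule forces any $d_5$ on $\ctwo \wedge V(0)$ to be $H^*(C_2,\E_*)$-linear and hence determined by its values on the $v_1$-power module generators. I would therefore reduce to checking that each module generator surviving to $E_5$ either supports no $d_5$ for bidegree reasons or maps compatibly under $p$ to a class in $E_5(\ctwo)$ that is known to be a $d_5$-cycle, forcing the $d_5$ to vanish there too.

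The main obstacle is the careful bookkeeping of the $E_5$-page bidegrees: I must confirm that for \emph{every} surviving $\alpha^k$-cokernel class the target bidegree $(w-1,s+5)$ is genuinely empty on $E_5$, rather than merely empty on $E_\infty$. This requires being precise about which higher-filtration $\F_4$'s survive the $d_3$ stage and which are slated to die under $d_7$, since a careless count could miss a class that is present on $E_5$ but absent later. Once the $E_5$ chart is drawn accurately (as in Figure~\ref{fig:E7_page}, read one stage earlier), the vanishing should be immediate from the filtration gap, and the naturality/module argument then upgrades this from ``no room in these particular spots'' to ``no $d_5$ at all.''
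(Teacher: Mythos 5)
Your third paragraph --- the module structure of $E_r(\ctwo\wedge V(0))$ over $E_r(\ctwo)$ combined with the vanishing of $d_5$ in the base spectral sequence --- is exactly the paper's proof, and it is complete once you reduce to the module generators $1$ and $u^{-1}$ in filtration $0$ and check that these are $d_5$-cycles ($1$ trivially, and $u^{-1}$ because its target bidegree $(1,5)$ is empty on $E_5$, as $1-5\equiv 4 \pmod 8$ while the surviving higher-filtration classes sit only in stems congruent to the filtration plus $0$ or $2$ mod $8$). But your first exclusion step contains a genuine error: the $\F_4\llbracket u_1\rrbracket$-towers in filtrations $0,1,2$ are \emph{not} permanent cycles, and they do not all arise via $i$ from the HFPSS for $\ctwo$. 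The towers on odd powers of $u$ (e.g.\ $u^{-1}$, $u^{-5}$) cannot be in the image of $i$, since $H^*(C_2,\E_*)=\W\llbracket u_1\rrbracket[[u^2]^{\pm 1},\alpha]/(2\alpha)$ contains only even powers of $u$; indeed $p_*(u^{-1})=\alpha\neq 0$. And the very next lemma shows these towers support nontrivial $d_7$'s ($d_7(u^{-4})=\alpha^7$, $d_7(u^{-5})=u^{-1}\alpha^7$), so ``permanent cycle'' is false. What you need, and what is true, is only that they are $d_5$-cycles: for the even towers this does follow from naturality under $i$ (since every class in $E_5(\ctwo)$ is a $d_5$-cycle, whether or not it is permanent), and for the odd towers it follows from the bidegree count above or from the module reduction.

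Your bidegree analysis of the higher-filtration $\alpha$-power classes is also not airtight as stated: a source at $(w,s)$ with $w-s\equiv 0\pmod 8$ has target $(w-1,s+5)$ with $(w-1)-(s+5)\equiv 2\pmod 8$, which \emph{is} an occupied position on $E_5$ (e.g.\ $\alpha^3$ at $(3,3)$ could a priori hit the nonzero group at $(2,8)$), and arguing that the target ``must survive to receive a $d_7$'' presupposes the answer. The module Leibniz rule disposes of all these cases at once, since every surviving class is of the form $r\cdot 1$ or $r\cdot u^{-1}$ with $r$ a $d_5$-cycle in $E_5(\ctwo)$; I would therefore drop the permanent-cycle claim and the case analysis and run only the module argument, which is what the paper does.
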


\begin{proof}
This follows from the module structure of $E_r(\E^{hC_2} \wedge V(0))$ over $E_r(\E^{hC_2})$ and the fact that there are no $d_5$ differentials in  $E_r(\E^{hC_2})$.
\end{proof}

Before we proceed with the next differentials, we would like to state the following useful technical lemma, which is a special case of the Geometric Boundary Theorem \cite[Thm. 2.3.4]{ravgreen}.

\begin{lem}\label{GBT}
There are maps 
$
\delta_r: E_r^{s,t}(\ctwo \wedge V(0)) \to E_r^{s+1, t} (\ctwo)
$
such that 
\[\delta_2: E_2^{s,t}(\ctwo \wedge V(0)) \to E_2^{s+1, t} (\ctwo)\]
is the connecting homomorphism arising from \eqref{eq:fiber-seq-V(0)}. For all $r$, 
\[
\delta_r d_r = d_r\delta_r
\]
and $\delta_{r+1}$ is induced by $\delta_r$.
 \end{lem}
\begin{lem} \label{lem:3.5}
The $d_7$ differentials in the HFPSS for $\ctwo \wedge V(0)$ are generated by
\begin{align*}
    d_7(u^{-4})&=\alpha^7,\\
    d_7(u^{-5})&=u^{-1} \alpha^7,
\end{align*} 
and linearity with respect to $u_1$, 
$\alpha$ and $u^{\pm 8}$.
\end{lem}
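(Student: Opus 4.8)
The plan is to follow the three-part template of the proof of Lemma~\ref{lem:d3c2v0}: obtain the even generator $d_7(u^{-4})=\alpha^7$ by naturality, propagate both generators by the module structure, and handle the odd generator $d_7(u^{-5})=u^{-1}\alpha^7$ separately. First I would note that $u^{-4}$ survives to $E_7$: it is a $d_3$-cycle, since in $\E^{hC_2}$ we have $d_3(u^{-4}) = 2u^{-2}d_3(u^{-2}) = 0$ (as $2\alpha = 0$) and $i$ commutes with $d_3$, and there are no $d_5$-differentials by Lemma~\ref{lem::c2_d5}. Since the bottom-cell inclusion induces a map of spectral sequences $i\colon E_r(\E^{hC_2}) \to E_r(\E^{hC_2}\wedge V(0))$ commuting with differentials, Lemma~\ref{lem::c2_d7} gives $d_7(u^{-4}) = i(\alpha^7) = \alpha^7$, which is nonzero on $E_7$. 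The linearity over $\alpha$, $u_1$, $u^{\pm 8}$ then follows from the module structure over $E_r(\E^{hC_2})$: these classes are $d_7$-cycles in $\E^{hC_2}$ (for $u^{\pm 8} = (u^{\pm 4})^2$ one has $d_7(u^{-8}) = 2u^{-4}\alpha^7 = 0$), so the module Leibniz rule spreads the two generators across the whole page.

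The crux is the odd generator $d_7(u^{-5}) = u^{-1}\alpha^7$, and this is where the main difficulty lies. I cannot simply multiply $d_7(u^{-4})$ by $u^{-1}$: because $V(0)$ is not a ring spectrum, $E_r(\E^{hC_2}\wedge V(0))$ is only a module over $E_r(\E^{hC_2})$ and admits no internal Leibniz rule for the odd class $u^{-1}$ -- this is precisely why Lemma~\ref{lem:d3c2v0} required a separate input for $d_3(u^{-3})$. Instead I would use the top-cell projection $p\colon E_r^{w,s}(\E^{hC_2}\wedge V(0)) \to E_r^{w-1,s+1}(\E^{hC_2})$, which on $E_2$ is the Bockstein of the long exact sequence~\eqref{eq:1} and commutes with differentials. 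A short cocycle computation (using that $\alpha$ is represented by $\sigma \mapsto u^{-1}$) gives $p(u^{-5}) = \alpha u^{-4}$, whence
\[ p(d_7(u^{-5})) = d_7(p(u^{-5})) = d_7(\alpha u^{-4}) = \alpha\, d_7(u^{-4}) = \alpha^8 \neq 0 \]
in $E_7(\E^{hC_2})$; in particular $d_7(u^{-5}) \neq 0$.

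To identify the value, I would show that $p$ is injective on the target group $E_7^{9,7}(\E^{hC_2}\wedge V(0))$. By exactness of~\eqref{eq:1} the kernel of $p$ is the image of $i$, and any class in $E_7^{9,7}(\E^{hC_2})$ would have internal degree $16$ and filtration $7$, hence a factor $\alpha^7 u^{-1}$ involving an odd power of $u$ that is absent from $\E^{hC_2}$; thus $E_7^{9,7}(\E^{hC_2}) = 0$, so $\operatorname{im}(i) = 0$ and $p$ is injective there. Since $p(u^{-1}\alpha^7) = \alpha^7 p(u^{-1}) = \alpha^7 \cdot \alpha = \alpha^8$ as well, injectivity forces $d_7(u^{-5}) = u^{-1}\alpha^7$. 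The step demanding the most care is justifying that~\eqref{eq:1} descends to an exact sequence $\operatorname{im}(i) = \ker(p)$ on $E_7$-pages at this bidegree, so that the injectivity argument is legitimate; alternatively one could look for a homotopy-level differential as in Lemma~\ref{lem:d3c2v0}, but the bookkeeping with $v_1$ and $\eta$ does not close up as cleanly here (the required $u_1$-power does not match), so I would prefer the projection argument.
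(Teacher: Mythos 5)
Your proposal is correct and follows essentially the same route as the paper: the first differential by naturality along the bottom-cell inclusion, linearity from the module structure over $E_r(\E^{hC_2})$, and the second differential via the top-cell projection $p$ — your explicit computation $p(d_7(u^{-5}))=d_7(\alpha u^{-4})=\alpha^8$ together with the injectivity of $p$ on the target is precisely the Geometric Boundary Theorem argument the paper invokes. The descent-to-$E_7$ subtlety you flag resolves immediately here, since $E_7^{9,7}(\E^{hC_2}\wedge V(0))$ is the single copy of $\Fbb_4$ on $\alpha^7u^{-1}$ and $p(\alpha^7u^{-1})=\alpha^8$ is visibly nonzero on $E_7(\E^{hC_2})$.
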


\begin{proof}
The first differential follows from the same differential in the homotopy fixed point spectral sequence for $E^{hC_2}$. 
The spectral sequence $E_r(E^{hC_{2}}\wedge V(0))$ is a module over the ring spectral sequence $E_r(E^{hC_{2}})$, where $d_7(\alpha)=d_7(u_1)=0$ and $d_7(u^{\pm 8})=0$, hence the differentials are linear with respect to these elements. 
Finally, 
since $\delta_2(u^{-5})=u^{-4}\alpha$, 
\Cref{GBT} implies that 
 \[
 d_7(u^{-5})=d_7(\delta_7(u^{-4}\alpha ))=\delta_7(d_7(u^{-4}\alpha))=\delta_7(\alpha^8) =u^{-1}\alpha^7.
 \]
\end{proof}

\begin{figure}[h]
    \centering\includegraphics[width=0.8\linewidth]{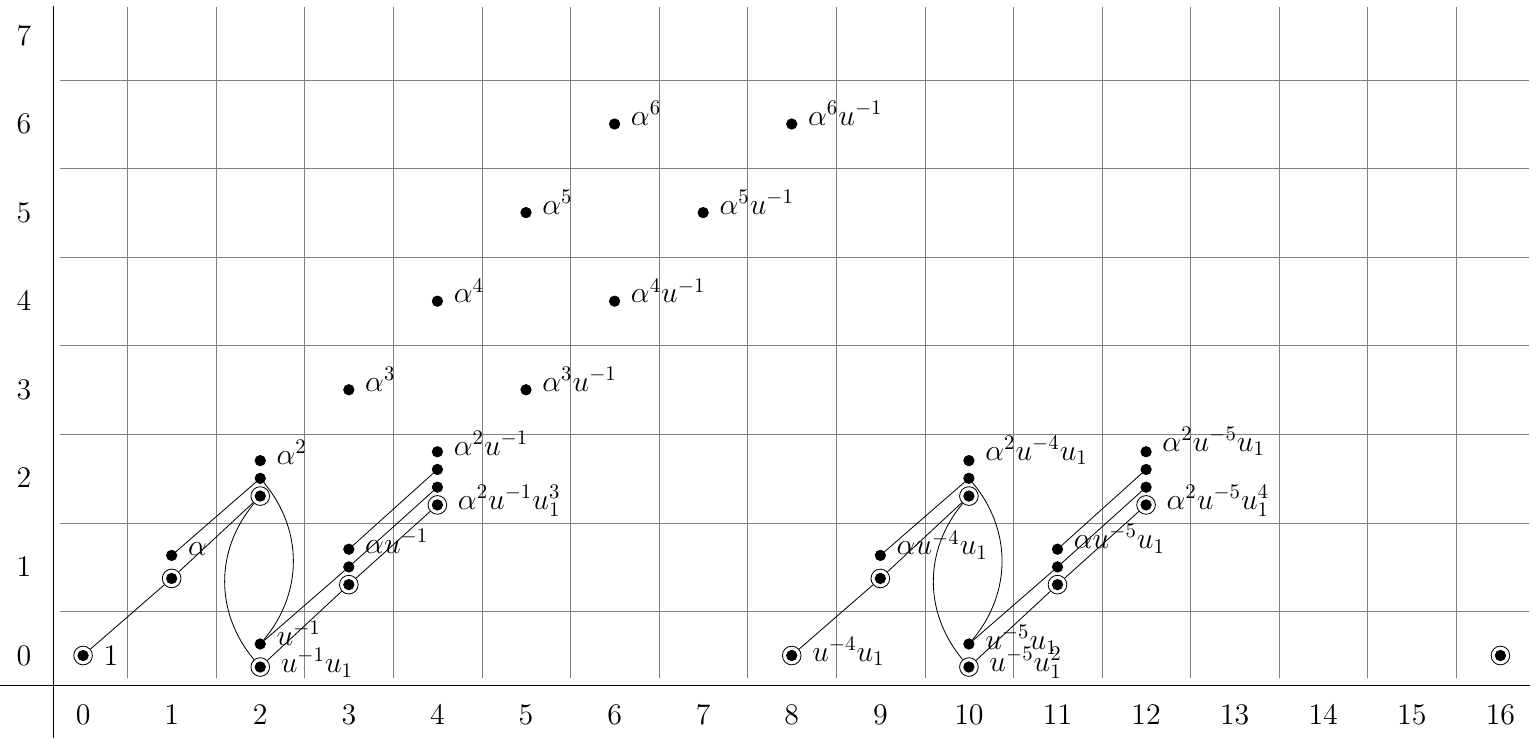}
    \caption{The homotopy groups of $\ctwo \wedge V(0)$. The notation is: $\bullet =\F_4$, and $\protect\obullet = \F_4\llbracket u_1\rrbracket$. The lines of slope 1 indicate multiplication by $\eta$. The lines connecting elements in the same stem indicate group extensions. The homotopy groups are 16-periodic.}
    \label{fig:Einfty_page_labeled}
\end{figure}

\subsection{Extension Problems}\label{subsec:c2smv0extproblems}
 Recall the long exact sequence in homotopy groups
\[
\cdots\to\pi_{2k-s}(\E^{hC_2}) \xrightarrow{i}\pi_{2k-s}(\E^{hC_2}\wedge V(0))\xrightarrow{p}\pi_{2k-s-1}(\E^{hC_2}) \xrightarrow{2}\cdots.
\]

All the non-trivial extensions on the $E_{\infty}$ page are generated by the extension in the following lemma. 
\begin{lem}\label{lem::c2_v0_extension}
We have $2u^{-1} = \alpha^2 u_1 \in \pi_2 (\E^{hC_2} \wedge V(0))$.
\end{lem}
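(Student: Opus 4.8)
The plan is to compute the extension directly from the classical factorization of the doubling self-map of $V(0)$. At the prime $2$ one has $2\cdot\mathrm{id}_{V(0)} = i\,\eta\, p$, i.e. multiplication by $2$ on the Moore spectrum factors as $V(0)\xrightarrow{p}S^1\xrightarrow{\eta}S^0\xrightarrow{i}V(0)$. Smashing with $\ctwo$, this identity holds verbatim for the self-map of $\ctwo\wedge V(0)$, so that for every $x\in\pi_*(\ctwo\wedge V(0))$ we get $2x = i_*\bigl(\eta\cdot p_*(x)\bigr)$. I would take $x=u^{-1}\in\pi_2(\ctwo\wedge V(0))$ (any homotopy class detected by the surviving $E_\infty$-class $u^{-1}$ at $(2,0)$) and track the three operations $p_*$, multiplication by $\eta$, and $i_*$ through the associated spectral sequences.

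First I would identify $p_*(u^{-1})\in\pi_1\ctwo$ using the Geometric Boundary Theorem: the map $p$ realizes on $E_\infty$-pages the algebraic connecting homomorphism (Bockstein) $\beta\colon H^0(C_2,\E_*/2)\to H^1(C_2,\E_*)$ of the short exact sequence \eqref{eq:ses-1}. Since $C_2$ acts on $u$ by $-1$, the element $u^{-1}$ is $C_2$-invariant modulo $2$; lifting it to $\E_*$ and applying the cocycle differential gives $g\cdot u^{-1}-u^{-1}=-2u^{-1}$ for the generator $g$, and dividing by $2$ shows $\beta(u^{-1})=\alpha$. As $\alpha$ survives to $E_\infty(\ctwo)$, the Geometric Boundary Theorem yields that $p_*(u^{-1})$ is detected by $\alpha$.

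Next I would compute $\eta\cdot p_*(u^{-1})=\eta\cdot\alpha\in\pi_2\ctwo$. By the Remark following \Cref{lem:E2C2V0}, $\eta$ is detected by $u_1\alpha$, while $p_*(u^{-1})$ is detected by $\alpha$; since the HFPSS for $\ctwo$ is multiplicative and $u_1\alpha^2$ is a nonzero permanent cycle at $(2,2)$ (no $d_3$ or $d_7$ enters or leaves this spot, as $u_1$ and $\alpha$ are permanent cycles and the target/source bidegrees are empty), the product $\eta\alpha$ is detected by $u_1\alpha^2$. Finally $i_*$ induces reduction modulo $2$ on $E_\infty$, so $i_*(\eta\alpha)$ is detected by $u_1\alpha^2=\alpha^2 u_1$ in $E_\infty(\ctwo\wedge V(0))$. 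Combining, $2u^{-1}=i_*\bigl(\eta\cdot p_*(u^{-1})\bigr)$ is detected by $\alpha^2 u_1$, which is the claimed hidden extension.

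The main obstacle is the detection bookkeeping rather than any deep input: I must ensure that at each of the three stages the leading-term computation is exact, i.e. that no hidden filtration jump intervenes. The delicate points are (i) verifying the hypotheses of the Geometric Boundary Theorem so that $p_*(u^{-1})$ is genuinely detected by $\alpha$ rather than by a class of higher filtration, and (ii) confirming that $u_1\alpha^2$ is nonzero on $E_\infty$ both in $\ctwo$ and after applying $i$, so that $\eta\alpha\neq0$ and the resulting extension is nontrivial. As a cross-check I would include the alternative derivation through $v_1$: since $p(v_1)=\eta$, the same factorization gives $2v_1=i_*(\eta^2)$, and comparing detecting classes ($v_1\leftrightarrow u_1u^{-1}$ and $\eta^2\leftrightarrow u_1^2\alpha^2$) and cancelling the injective action of $u_1$ on $\E_*/2$ again recovers $2u^{-1}=\alpha^2 u_1$.
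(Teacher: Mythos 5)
Your proposal is correct and follows essentially the same route as the paper: the paper likewise takes $p_*(u^{-1})=\alpha$ and invokes Lemma 2.19 of \cite{Beaudry_2022} (which is precisely the factorization $2\cdot\mathrm{id}_{V(0)}=i\circ\eta\circ p$ that you derive) to conclude $2u^{-1}=i_*(\eta\alpha)=\alpha^2u_1$. Your Bockstein justification of $p_*(u^{-1})=\alpha$ and the $v_1$ cross-check merely fill in details the paper leaves implicit.
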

\begin{proof}
For $u^{-1} \in \pi_2(\E^{hC_2}\wedge V(0))$, we have $p_\ast(u^{-1})=\alpha\in\pi_1 \E^{hC_2}$. Then by Lemma 2.19 from \cite{Beaudry_2022},
\[
2u^{-1}=i_\ast(\eta\alpha)=\alpha^2u_1.\qedhere
\]
\end{proof}

As a corollary of Lemma~\ref{lem::c2_v0_extension}, we are able to resolve the extension problems on the $E_{\infty}$ page (see Figure~\ref{fig:Einfty_page_labeled} and find that
$\pi_2(\ctwo\wedge V(0)) = \alpha^2\Fbb_4\oplus u^{-1}\W/4\llbracket u_1\rrbracket$ and $\pi_{10}(\ctwo\wedge V(0)) = \alpha^2 u^{-4} u_1 \Fbb_4\oplus u^{-5} u_1 \W/4\llbracket u_1\rrbracket $.
\section{The homotopy fixed point spectral sequence for \texorpdfstring{$\operatorname{E}^{hC_6}_2$}{E2hC6}}\label{sec:c6}
In this section, we compute the homotopy fixed point spectral sequence 
\begin{equation}\label{eq:C6-SS}
E^{s,t}_r(\E^{hC_6}): H^s(C_6, \E_t) \Longrightarrow \pi_{t-s}(\E^{hC_6}) 
\end{equation} 
We do not claim any originality for the computations in this section; they have been known for decades, starting with the work of Mahowald and Rezk in \cite{MR}. We present this computation here to build a base for our computations in the next two sections. 
\subsection{Computing the \texorpdfstring{$E_2$}{E2}-page}\label{sec:5.1}
There is an action of the group $C_3=\mathbb{F}_4^{\times}=\langle \zeta\rangle$ on the spectrum $\E^{hC_2}$ and  $\E^{hC_6}\simeq (\E^{hC_2})^{hC_3}$. In addition, the homotopy fixed point spectral sequence for $\csix$ is the $C_3$ fixed points of the homotopy fixed point spectral sequence for $\E^{hC_2}$,
with $d_r$ differentials in the former being the restriction of the  $d_r$ differentials in the latter. The action of $C_3$ on the group cohomology $H^*(C_2, \E_*)$ is given by
 (see (2.2) of \cite{1a7d99963b1048d8ad18fe824397caab}):
\begin{equation}\label{eq:C3action}
    \zeta \cdot u_1 = \omega u_1 \qquad
    \zeta \cdot u = \omega u \qquad
    \zeta \cdot \alpha = \omega^2\alpha,
\end{equation}
where $\omega$ is a primitive third root of unity.

\begin{figure}[h]
\includegraphics[width=0.9\linewidth]{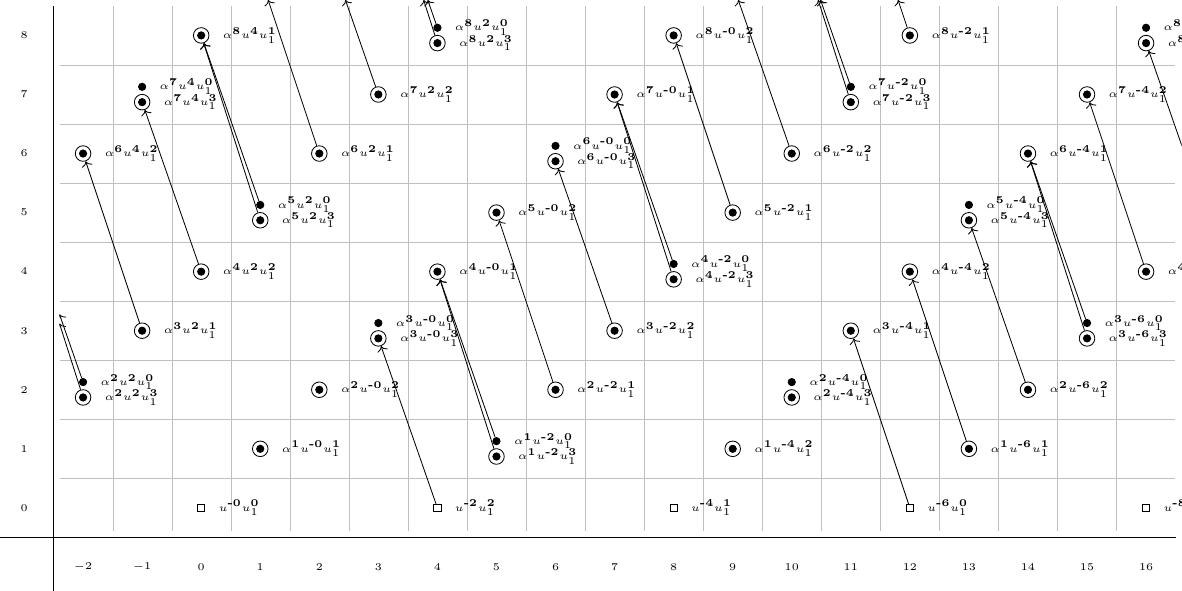}
\caption{The $E_3$ page of the HFPSS for $\csix$. The symbol $\bullet$ denotes $\Fbb_4$, the symbol $\Circled{\bullet}$ denotes $\Fbb_4 \llbracket u_1^3 \rrbracket$, and the symbol $\Box$ denotes $\W\llbracket u_1^3 \rrbracket$. The terms on the right of each symbol denote the generators.}\label{fig:e3_csix}
 \end{figure}

Applying these formulas, we can compute the $E_2$ page of the HFPSS for $\csix$. 

\begin{prop}[Lemma 2.3 of \cite{1a7d99963b1048d8ad18fe824397caab}]\label{prop:e2_csix}
Let $w=u^{-2}\alpha$. The $E_2$-page of the homotopy spectral sequence \eqref{eq:C6-SS} is given by:
\begin{align*}
E_2^{*,*}=H^*(C_6, \E_*)
=\W \llbracket u_1^3 \rrbracket[w, [u_1u^{-4}], [u^6]^{\pm 1}]/ (2w).
\end{align*}
\end{prop}
We also give names to the following important $C_6$-invariant elements 
\begin{equation}\label{eq:C6cohElements}
  v_1v_2:=u_1u^{-4} \qquad
  v_2^2:=u^{-6} \qquad
v_1^2:= u_1^2u^{-2} =  [v_1v_2]^2 [v_2^2]^{-1} 
\nonumber 
\end{equation}
Note that these elements are indecomposable: $v_1=u^{-1}u_1$ and $v_2=u^{-3}$ are not $C_6$ invariant and are not elements in this cohomology ring. Note also that $\nu \in \pi_3\mathbb{S}$ is detected by $\alpha^3=w^3 v_2^2$.

\subsection{Computing the intermediate pages}
The only non-trivial differentials in the spectral sequence \eqref{eq:C6-SS} $E_r(\E^{hC_6})$ are $d_3$ and $d_7$. As we mentioned above, the differentials in $E_r(\E^{hC_6})$ are simply restrictions to the $C_3$ fixed points of the differentials in the spectral sequence $E_r(\E^{hC_2})$.

\begin{lem}\label{lem:e3-csix}
  The $d_3$-differentials in the spectral sequence \eqref{eq:C6-SS} are generated by 
  \begin{align*}
  &d_3(u^{-2}u_1^2)=\alpha^3 u_1^3\\
  &d_3(u^{-2}\alpha)=\alpha^4 u_1
  \end{align*}
  and linearity with respect to $\nu=\alpha^3$,  $\eta=\alpha u_1$, $u^{-4} u_1$ and $v_2^{\pm 4}=u^{\pm 12}$.
\end{lem}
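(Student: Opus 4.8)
The plan is to deduce every $d_3$ from the already-known $d_3$ in the HFPSS for $\ctwo$, using that the HFPSS for $\csix$ is the $C_3$-fixed-point spectral sequence of the one for $\ctwo$. Since $3$ is invertible at the prime $2$, passing to $C_3$-homotopy fixed points is exact and simply extracts the $\F_4^{\times}$-invariant subring of \Cref{prop:e2_csix}; in particular $E_3(\csix)=E_2(\csix)$ is that invariant subring, and $d_3$ on it is nothing but the restriction of the $\ctwo$-differential. By \Cref{lem::c2_d3} the latter is the derivation determined by
\[
d_3(u^{-2}) = \alpha^3 u_1, \qquad d_3(u_1) = d_3(\alpha) = d_3(u^{\pm 4}) = 0 .
\]
So the whole computation reduces to evaluating this derivation on the invariant algebra generators $j_0 = u_1^3$, $w_5 = u^{-2}\alpha$, $[v_1v_2] = u_1 u^{-4}$, and $[v_2^2]^{\pm 1} = u^{\mp 6}$.

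First I would run the Leibniz rule on each generator. As $j_0$ and $[v_1v_2]$ are built only from the cycles $u_1$ and $u^{\pm 4}$, they are $d_3$-cycles, so $d_3(j_0) = d_3([v_1v_2]) = 0$. For $w_5$ one reads off immediately
\[
d_3(w_5) = d_3(u^{-2})\,\alpha = \alpha^4 u_1 ,
\]
which is the second stated differential $d_3(\alpha u^{-2}) = \alpha^4 u_1$. For the first, I would apply $d_3$ to the invariant element $u^{-2}u_1^2 = [v_1^2] = [v_1v_2]^2[v_2^2]^{-1}$, obtaining $d_3(u^{-2}u_1^2) = u_1^2\,d_3(u^{-2}) = \alpha^3 u_1^3 = \alpha^3 j_0$; in the leading-generator conventions of \Cref{fig:e3_csix} this is exactly the recorded differential $d_3(u^{-2}u_1^2)=\alpha^3$, the factor $j_0=u_1^3$ being absorbed into the $\F_4\llbracket u_1^3\rrbracket$-module structure at the target. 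Note that the remaining generator $[v_2^2]$ is \emph{not} a cycle (indeed $d_3([v_2^2]) = \alpha^3 [v_1v_2]$), but its differential will be forced by the ring relation and so need not be listed separately.

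Finally I would verify that these two differentials, together with linearity over the displayed $d_3$-cycles $\alpha^3$, $[v_1v_2]=u^{-4}u_1$, $\eta = \alpha u_1$, and $[v_2^2]^{\pm 2} = v_2^{\pm 4}$ (each a cycle by the same Leibniz computation) and $\W\llbracket j_0\rrbracket$-linearity, determine $d_3$ on all of $E_3(\csix)$. Two consistency points must be checked: that $d_3$ descends to the quotient of \Cref{prop:e2_csix}, i.e. that it annihilates the defining relations $2w_5$ and $[v_1v_2]^3 - j_0[v_2^2]^2$ — both hold since $2\alpha=0$ and $d_3([v_1v_2])=0$ kill every term in characteristic $2$ — and that propagation through the relation $[v_1^2][v_2^2]=[v_1v_2]^2$ recovers $d_3([v_2^2]) = \alpha^3[v_1v_2]$ from the computed $d_3([v_1^2])=\alpha^3 j_0$. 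I expect the main obstacle to be precisely this invariant-theoretic bookkeeping: confirming that the named cycles generate the subring of $d_3$-cycles, so that ``linearity'' genuinely captures every differential, and that the two listed generating differentials propagate consistently through the single relation $[v_1v_2]^3 = j_0[v_2^2]^2$. The derivation property itself, and each individual Leibniz evaluation, are automatic from the multiplicativity of the HFPSS and require no further input.
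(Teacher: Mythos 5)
Your proposal is correct and is essentially the paper's own argument: the HFPSS for $\csix$ is the $C_3$-fixed-point spectral sequence of the one for $\ctwo$, so the $d_3$ is just the restriction of the derivation of \Cref{lem::c2_d3} to the $\F_4^{\times}$-invariant subring, evaluated by the Leibniz rule on the invariant generators. Your observation that the literal value is $d_3(u^{-2}u_1^2)=\alpha^3 u_1^3=\alpha^3 j_0$, with the stated $\alpha^3$ being the leading-generator convention of \Cref{fig:e3_csix} (consistent with the cokernel $\pi_3(\csix)=\alpha^3\Fbb_4$), is also correct.
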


\begin{figure}[h]
\includegraphics[width=\linewidth]{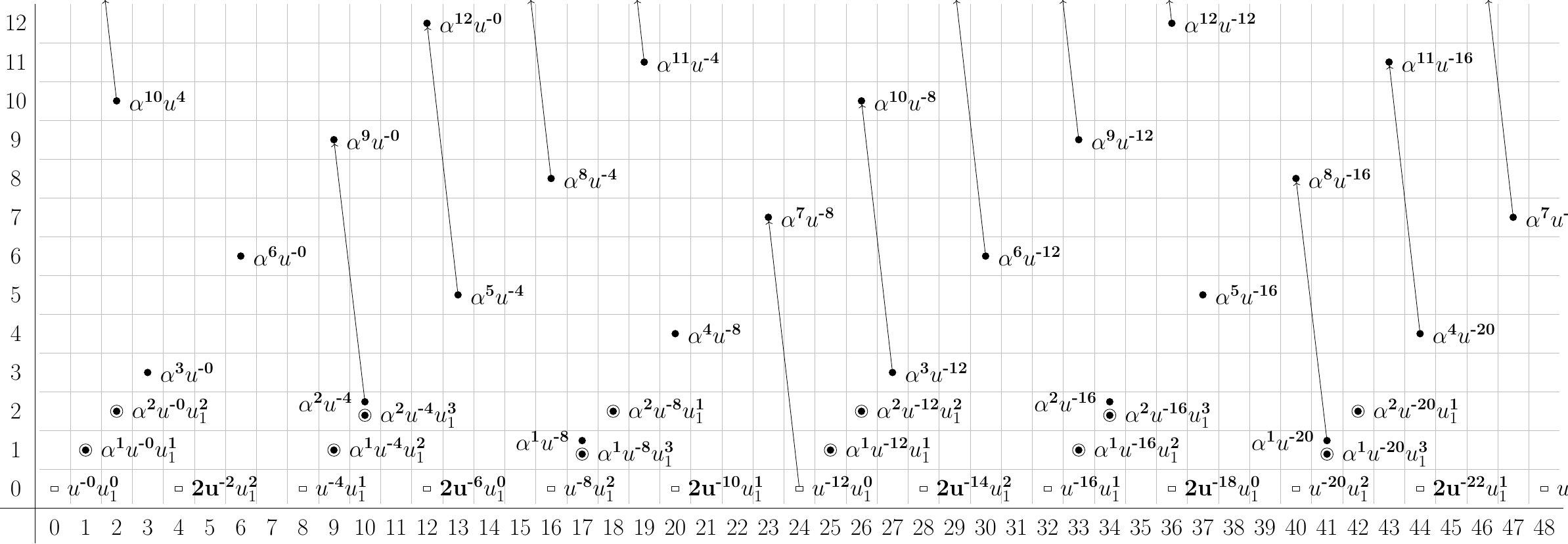}
\caption{The $E_7$ page of the HFPSS for $\csix$. The symbol $\bullet$ denotes $\Fbb_4$, the symbol $\Circled{\bullet}$ denotes $\Fbb_4 \llbracket u_1^3 \rrbracket$, and the symbol $\Box$ denotes $\W\llbracket u_1^3 \rrbracket$.}\label{fig:e7_csix}
\end{figure}

The $E_4$ page is (24,0)-periodic with the periodicity generator $u^{-12}$.
Due to sparseness, we have $E_4=E_7$ page, displayed in \Cref{fig:e7_csix}.
We summarize the differentials on the $E_7$ page in the following proposition.

\begin{prop}\label{prop:e7_csix}
    The $d_7$ differentials in the spectral sequence \eqref{eq:C6-SS} are generated by 
    \begin{align*}
    &d_{7}(\alpha^{2}u^{-4}) = \alpha^{9},\\
    &d_{7}(u^{-12}) = \alpha^{7} u^{-8},\\
    &d_{7}(\alpha u^{-20})= \alpha^{8} u^{-16},
    \end{align*}
    and linearity with respect to $\nu=\alpha^{3}$ and $v_2^{\pm 8}=u^{\pm 24}$.
\end{prop}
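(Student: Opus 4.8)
The plan is to compute the $d_7$ differentials on the $E_7$ page of the HFPSS for $\csix$ by leveraging the fact, stated at the start of Section~\ref{sec:c6}, that this spectral sequence is the $C_3$-fixed points of the HFPSS for $\ctwo$, with differentials being the restriction of the $\ctwo$-differentials. First I would take the known $d_7$ differential for $\ctwo$ from Lemma~\ref{lem::c2_d7}, namely $d_7(u^{-4})=\alpha^7$, and extend it by linearity to compute $d_7$ on the $\F_4^\times$-invariant classes surviving to $E_7$ of the $\csix$ spectral sequence. Concretely, the three generators $u^{-4}\alpha^2$, $u^{-12}$, and $u^{-20}\alpha$ are the $C_3$-invariant survivors supporting nontrivial $d_7$'s, and I would verify each is genuinely invariant using the action \eqref{eq:C3action}, then apply $d_7$ together with the Leibniz rule. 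For instance, $d_7(u^{-4}\alpha^2)=d_7(u^{-4})\alpha^2=\alpha^7\alpha^2=\alpha^9$ is immediate; the targets $\alpha^9$, $u^{-8}\alpha^{17}$, and $u^{-16}\alpha^8$ should each be checked to be nonzero invariant classes on $E_7$.

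The key steps, in order, are as follows. First, I would identify precisely which classes survive to the $E_7$ page of the $\csix$ spectral sequence after the $d_3$'s of Lemma~\ref{lem:e3-csix}, reading off from \Cref{fig:e7_csix} that the surviving classes are organized into the $\F_4$, $\Fbb_4\llbracket u_1^3\rrbracket$, and $\W\llbracket u_1^3\rrbracket$ patterns. Second, I would use the $(24,0)$-periodicity with generator $u^{-12}$ (equivalently $[v_2^2]^{-2}$, since $[v_2^2]=u^{-6}$) noted just before the proposition, which reduces the verification to a single period; the three listed differentials correspond to the three distinct towers within one period. Third, for each generator I would compute $d_7$ by writing it as an invariant monomial in $u^{-4}$, $\alpha$, and $u_1$, applying the $\ctwo$-differential $d_7(u^{-4})=\alpha^7$ via the module/Leibniz structure, and confirming the target survives to $E_7$ (i.e.\ is not already hit or zero). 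Finally, I would invoke linearity with respect to $\alpha^3$ and $u^{\pm 24}$ to generate all remaining $d_7$'s, matching the periodicity.

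The main obstacle I anticipate is the bookkeeping around restriction to invariants: a differential such as $d_7(u^{-12})$ requires expanding $u^{-12}=(u^{-4})^3$ and applying the Leibniz rule, which over $\F_4$ (characteristic $2$) gives $d_7((u^{-4})^3)=3(u^{-4})^2 d_7(u^{-4})=(u^{-4})^2\alpha^7=u^{-8}\alpha^7$ — but the stated target is $u^{-8}\alpha^{17}$, not $u^{-8}\alpha^7$. This discrepancy signals that $u^{-8}\alpha^7$ must already be zero or hit on $E_7$ (plausibly because $\alpha^7$-multiples were truncated by earlier $d_3$'s), so the genuine nonzero $d_7$ on the relevant surviving tower lands in higher $\alpha$-filtration. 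Resolving this correctly — determining exactly which $\alpha$-power survives in each tower on $E_7$ and hence the precise exponent in the target — is the delicate part, and I would handle it by carefully tracking the $E_7$-page module structure over the $\csix$ ring spectral sequence (where $\alpha^3$ is the relevant nonzero class) rather than naively transporting the $\ctwo$ formula. The linearity and periodicity claims, by contrast, should follow formally once the three generating differentials are pinned down.
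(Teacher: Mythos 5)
Your overall strategy is exactly the one the paper (implicitly) uses: the differentials in the HFPSS for $\csix$ are the restrictions of those for $\ctwo$ to the $C_3$-invariants, so one takes $d_7(u^{-4})=\alpha^7$ from Lemma~\ref{lem::c2_d7}, checks which invariant classes survive to $E_7$, and applies linearity. Your verifications of invariance via \eqref{eq:C3action} and your Leibniz computations for $u^{-4}\alpha^2$ and $u^{-20}\alpha$ are correct.

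However, your handling of the ``discrepancy'' at $u^{-12}$ contains a genuine error of reasoning. Your computation $d_7(u^{-12})=u^{-8}\alpha^{7}$ is in fact the \emph{correct} answer; the exponent $17$ in the statement is a typo for $7$. You can see this by a degree check: a $d_7$ originating at $u^{-12}$, which sits in $(\text{stem},\text{filtration})=(24,0)$, must land in $(23,7)$, and $u^{-8}\alpha^{7}$ lives there, whereas $u^{-8}\alpha^{17}$ lives in $(33,17)$ and so cannot possibly be the target of any differential out of $u^{-12}$. (The corresponding differentials in Sections~\ref{sec:c6v0} and~\ref{sec:c6y}, where the same class appears, are written as $d_7(u^{-12})=\alpha^7u^{-8}$, confirming this.) Your proposed resolution --- that ``the genuine nonzero $d_7$ on the relevant surviving tower lands in higher $\alpha$-filtration'' --- cannot work even in principle: a $d_7$ has a fixed bidegree shift, so there is no freedom for its target to move to a different filtration. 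Moreover, the class $u^{-8}\alpha^7$ does survive to $E_7$ as a generator of an $\Fbb_4$ (only its $u_1$-multiples are killed by the $d_3$'s of Lemma~\ref{lem:e3-csix}), so there is nothing to repair. The lesson is to trust the bidegree bookkeeping over the printed formula: once you have checked that the source is invariant, that the target bidegree is forced, and that the target class is nonzero on $E_7$, the Leibniz computation settles the differential.
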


By sparseness, there are no differentials on pages $E_8$ and higher and $E_8=E_{\infty}$, displayed in \Cref{fig:e8_csix}. The homotopy groups are 48-periodic with periodicity generator $v_2^8$.
We record the following generators  in positive filtrations:
\begin{equation}\label{eq:c6names}
\eta=\alpha u_1  \qquad \overline{\kappa}=\alpha^4 u^{-8} \qquad y=\alpha u^{-8}=w v_2^2
\end{equation}
and note that $\nu=\alpha^3=y^3 v_2^{-8}.$

\begin{figure}[h]
\includegraphics[width=\linewidth]{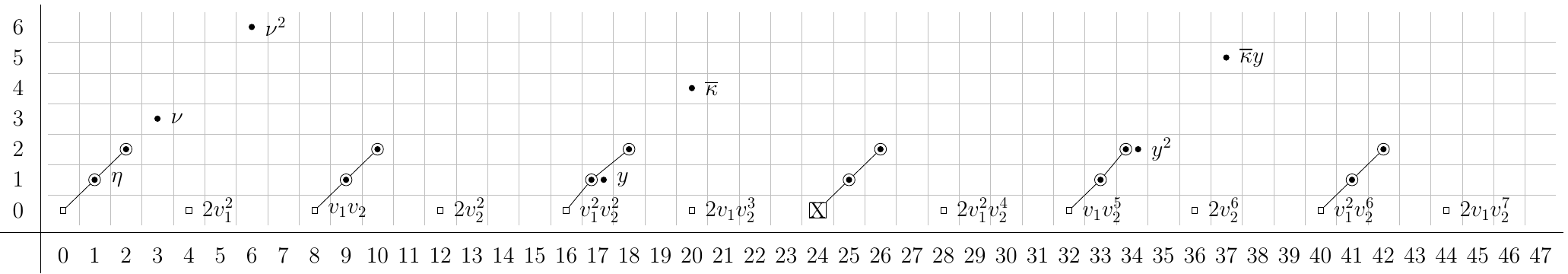}
\caption{The $E_8 = E_{\infty}$ page of the HFPSS for $\csix$. The symbol $\bullet$ denotes $\Fbb_4$,  $\Circled{\bullet}$ denotes $\Fbb_4 \llbracket u_1^3 \rrbracket$, and $\Box$ denotes $\W\llbracket u_1^3 \rrbracket$. The $\boxtimes$ at stem $24$ denotes $2v_2^4 \W \oplus v_1^3v_2^3 \W\llbracket u_1^3\rrbracket $. The lines denote multiplication by $\eta$. The homotopy groups are 48-periodic.}\label{fig:e8_csix}
\end{figure}
\section{The homotopy fixed point spectral sequence for \texorpdfstring{$\operatorname{E}^{hC_6}_2 \wedge V(0)$}{E2hC6 sm V(0)}}\label{sec:c6v0}
In this section, we compute the homotopy fixed point spectral sequence for $\csixv$ 
\begin{equation}\label{lem:c3_fix_points_V(0)}
E_2^{s,t}(\E^{hC_{6}}\wedge V(0)) \coloneqq (E_2^{s,t}(\E^{hC_{2}}\wedge V(0)))^{C_3} =H^s(C_6, \E_t/2) \Longrightarrow \pi_{t-s}(\E^{hC_6}\wedge V(0)).
\end{equation}

\begin{figure}
    \[\includegraphics[width=0.9\linewidth]{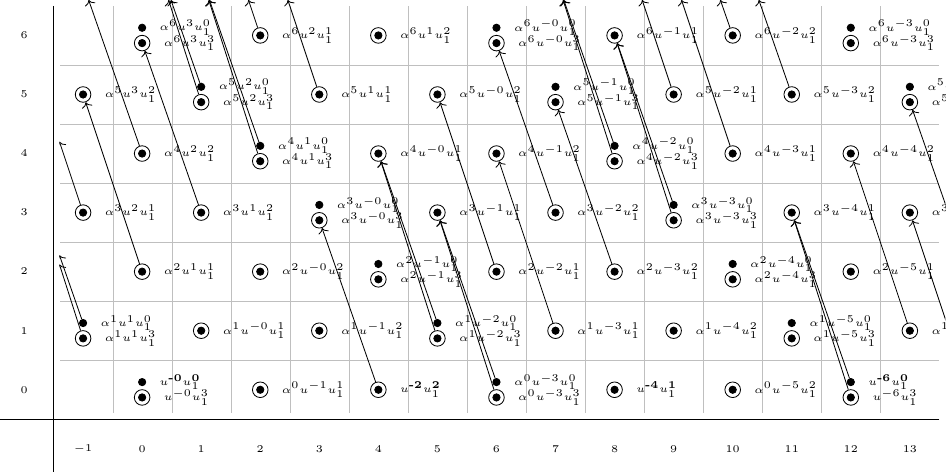}\]
    \caption{The $E_3$ page of HFPSS for $\csix \wedge V(0)$. The symbol $\bullet$ represents $\Fbb_4$, and the symbol $\Circled{\bullet}$ denotes $\Fbb_4 \llbracket u_1^3 \rrbracket$.}\label{fig::c6v0_e2}
\end{figure}
\begin{figure}[h]
\includegraphics[width=0.9\linewidth]{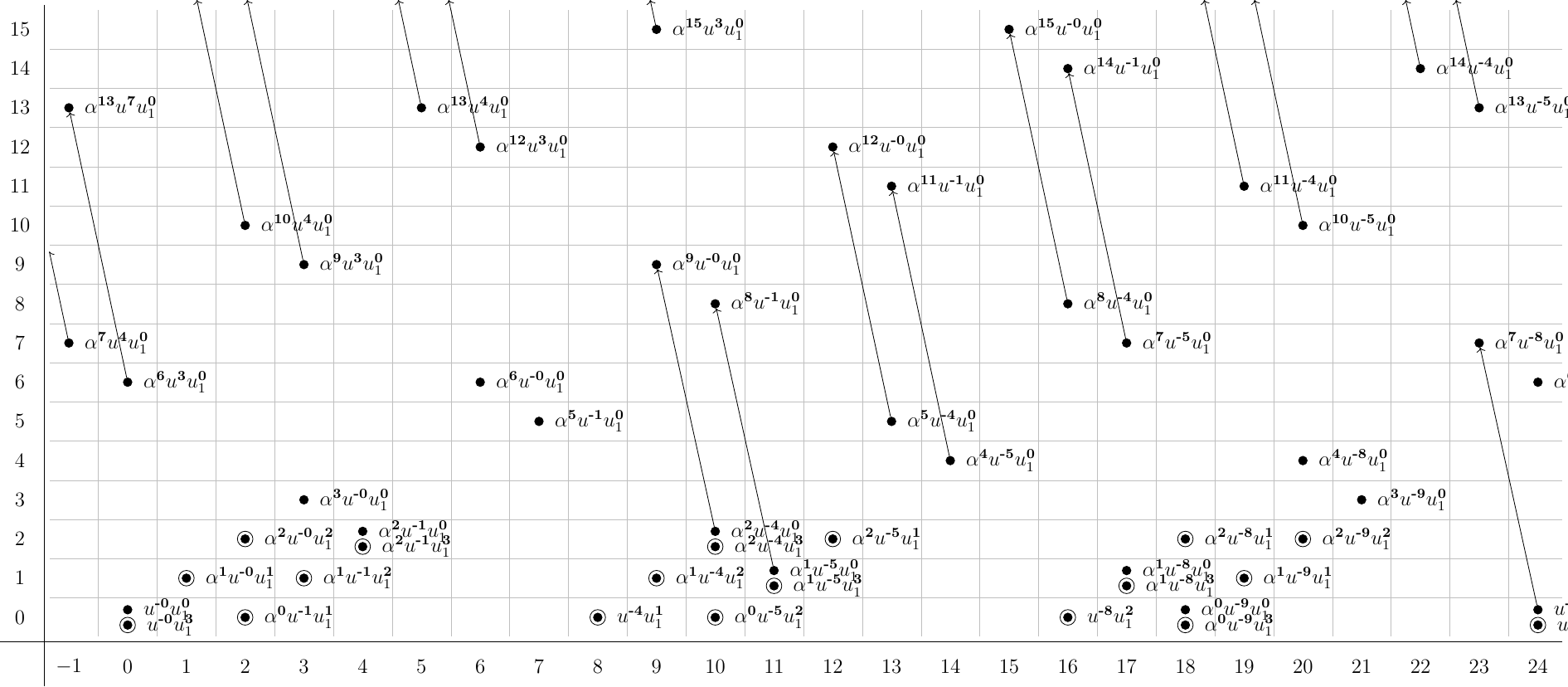}
\includegraphics[width=0.9\linewidth]{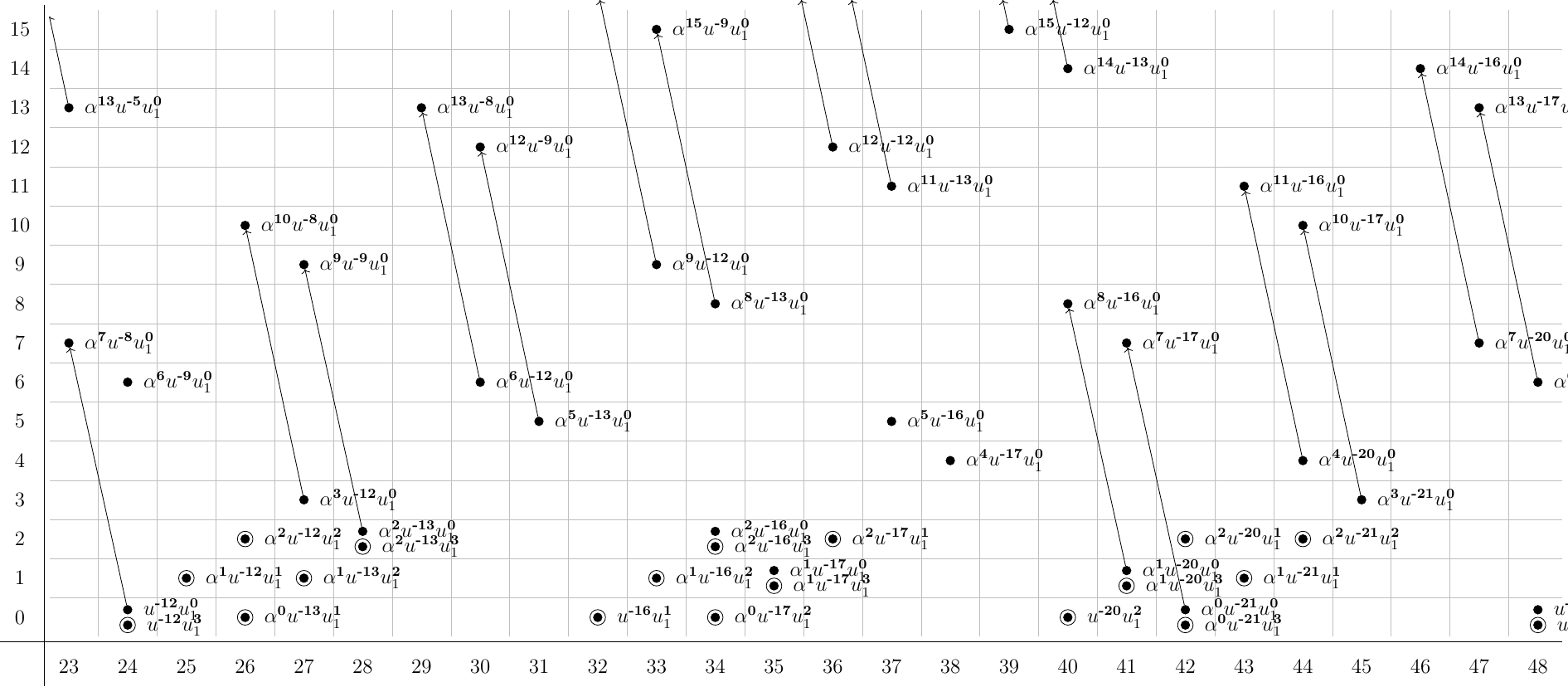}
\caption{The $E_7$ page of the HFPSS for $\csix \wedge V(0)$. The symbol $\bullet$ represents $\Fbb_4$, and the symbol $\Circled{\bullet}$ represents $\Fbb_4 \llbracket u_1^3 \rrbracket$.}\label{fig:c6v0_e7}
\end{figure}

\begin{figure}[h]
\includegraphics[width=0.9\linewidth]{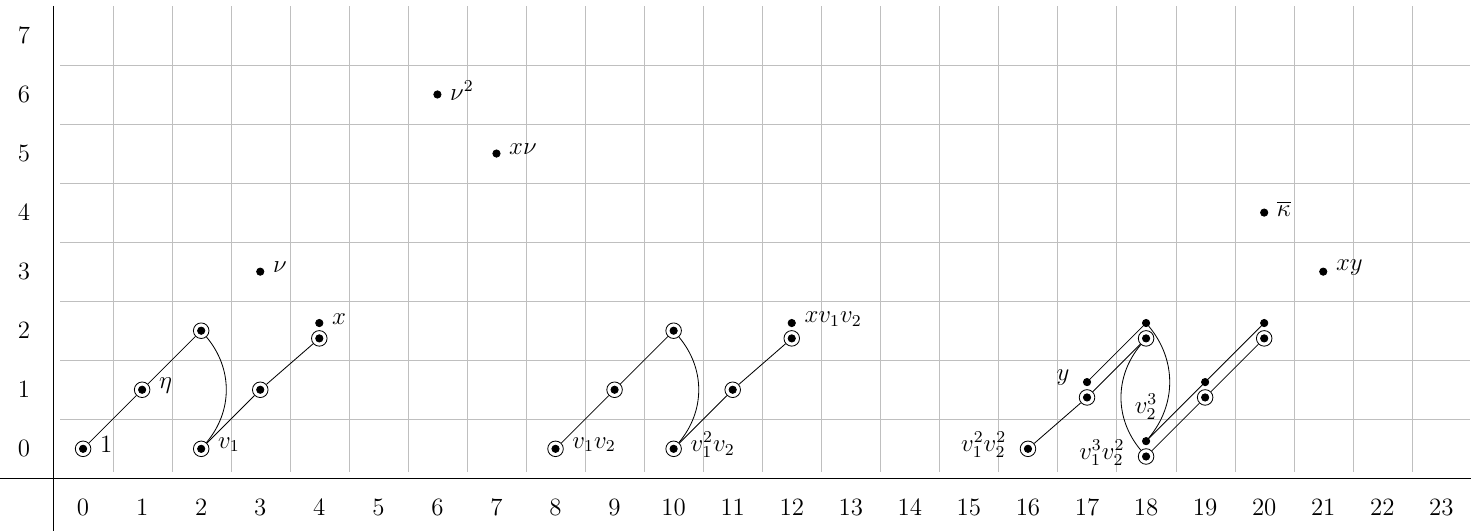}
\includegraphics[width=0.9\linewidth]{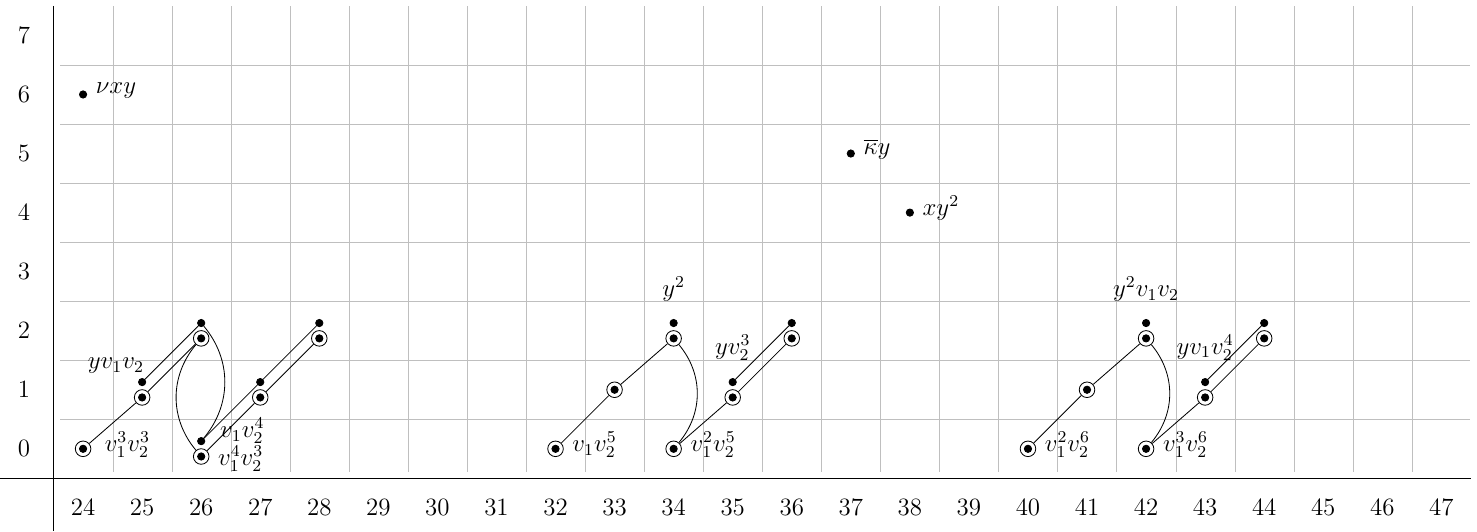}
\caption{The $E_8 = E_\infty$ page of the HFPSS for $\csixv$. The symbol $\bullet$ represents $\Fbb_4$, and the symbol $\Circled{\bullet}$ denotes $\Fbb_4 \llbracket u_1^3 \rrbracket$. The lines of slope $1$ denote multiplication by $\eta$.}\label{fig:c6v0_e8}
\end{figure}
\subsection{The $E_2$ page.}
Since  $\csix \simeq (\ctwo)^{hC_3}$, and $V(0)$ is a finite complex, we have  $\csixv \simeq (\ctwo \wedge V(0))^{hC_3}$. We use the isomorphism $H^{\ast}(C_{6}, \E_*/2) \cong H^{\ast}(C_{2}, \E_*/2)^{C_{3}}$ to compute $H^*(C_6, \E_*/2)$.
The homology of $H^{\ast}(C_{2}, \E_*/2)$ is described in \Cref{lem:E2C2V0} and the action of $C_3=\mathbb{F}_4^\times$ is as in \eqref{eq:C3action} and we read off the invariants.

\begin{lem}\label{lem:cohC6mod2}
We have
\[
H^*(C_6, \E_*/2)=\F_4[\![u_1^3]\!][[u_1u^{-1}], [u^{-3}]^{\pm 1}, w],
\]
where $w=\alpha u^{-2}$.
\end{lem}

\subsection{Differentials}
Most of the differentials follow from the fact that the HFPSS for $\csix \wedge V(0)$ is the fixed points of the HFPSS for $\ctwo \wedge V(0)$ under the action of $C_3$ given by \eqref{eq:C3action}.
\begin{lem}
The $d_3$ differentials in \eqref{lem:c3_fix_points_V(0)} are generated by 
\begin{align*}
&d_3(u_1^2u^{-2}) =\alpha^3 u_1^3\\
&d_3(u^{-3})=\alpha^3 u^{-1} u_1
\end{align*}
and linearity with respect to $\eta, \nu$, $u_1^3$ and $u^{\pm 12}$.
\end{lem}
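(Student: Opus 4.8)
The plan is to exploit the principle recalled just before the statement (technique (2)): every $d_3$ in the HFPSS \eqref{lem:c3_fix_points_V(0)} for $\csix \wedge V(0)$ is the restriction to $C_3$-fixed points of the corresponding $d_3$ in the HFPSS for $\ctwo \wedge V(0)$ computed in Lemma~\ref{lem:d3c2v0}. So I would first record that, under the action \eqref{eq:C3action} (where $u_1, u$ have $\zeta$-weight $\omega$ and $\alpha$ has weight $\omega^2$), each source and target in the statement has trivial weight: $u_1^2 u^{-2}$, $u^{-3}$, $\eta^3 = \alpha^3 u_1^3$, and $\alpha^3 u^{-1}u_1$ are all $C_3$-invariant and hence genuinely live in the $C_6$ spectral sequence.

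Next I would obtain the two generating differentials by restriction. The second, $d_3(u^{-3}) = \alpha^3 u^{-1}u_1$, is literally the second differential of Lemma~\ref{lem:d3c2v0} between two invariant classes, so nothing new is required. For the first I would write $u_1^2 u^{-2} = u_1^2\cdot u^{-2}$ and use module-linearity over the ring spectral sequence, in which $u_1$ is a permanent $d_3$-cycle, giving $d_3(u_1^2 u^{-2}) = u_1^2\, d_3(u^{-2}) = u_1^2\cdot \alpha^3 u_1 = \alpha^3 u_1^3 = \eta^3$. For the linearity claim I would identify which of the $\ctwo$-linearity cycles $\alpha, u_1, u^{\pm 4}$ survive to invariant $d_3$-cycles: these are exactly $\alpha^3$, $u_1^3 = j_0$, and $u^{\pm 12} = v_2^{\mp 4}$, the smallest invariant powers, over which the module structure forces linearity and propagates the two generators across the whole ($48$-periodic) page, combined with the module structure over the $\csix$ ring spectral sequence of Section~\ref{sec:c6}.

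The step I expect to be the main obstacle, and the one demanding the most care, is that $V(0)$ is not a ring spectrum at $p=2$, so \eqref{lem:c3_fix_points_V(0)} is only a \emph{module} spectral sequence over the ring spectral sequence for $\csix$ and is not itself multiplicative. Concretely, $v_1 = u_1 u^{-1}$ is a $d_3$-cycle, since $d_3(v_1) = u_1\,d_3(u^{-1}) = 0$ by Lemma~\ref{lem:d3c2v0}, yet its square $v_1^2 = u_1^2 u^{-2}$ supports the nonzero differential $\eta^3$; a naive Leibniz argument would wrongly return $0$. Thus throughout the argument I must track classes as module elements over the ring spectral sequence and never apply an internal product rule to the $V(0)$-factor — the first generator is precisely an instance of this phenomenon.

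The same care is what makes completeness the genuinely delicate part: one must check that the module generators sitting in odd cohomological filtration are also accounted for. These are the $h$-multiples, with $h = \alpha u$, which are \emph{not} reached by linearity over $\alpha^3, u_1^3, u^{\pm 12}$ alone (all of which preserve filtration modulo $3$). I would handle them through the module action of $\csix$-ring classes; for instance $h = w_5\, v_2^{-1}$ with $w_5 = u^{-2}\alpha$, and since $d_3(w_5) = \alpha^4 u_1$ was recorded in Lemma~\ref{lem:e3-csix} while $v_2^{-1} = u^3$ is a $d_3$-cycle, the module Leibniz rule recovers the differential on the $h$-tower from the already-known ring-level data. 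Verifying that this, together with the two stated generators, exhausts the $d_3$'s on all $C_3$-invariant module generators is where I would spend most of the effort.
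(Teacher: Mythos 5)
Your proposal is correct and follows essentially the same route as the paper, which offers no written proof of this lemma beyond the remark (technique (2) of the preceding subsection) that all differentials here are the restrictions to $C_3$-fixed points of those in the HFPSS for $\ctwo\wedge V(0)$ computed in \Cref{lem:d3c2v0} — exactly your argument. Your added checks (the $C_3$-weight computation for sources and targets, the warning that $V(0)$ admits no ring structure so only the module Leibniz rule over $E_r(\csix)$ may be invoked — illustrated by $d_3(v_1)=0$ versus $d_3(v_1^2)=\eta^3$ — and the separate treatment of the odd-filtration $h$-towers) are all sound and supply detail the paper leaves implicit.
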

\begin{proof}
   These differentials are the restrictions to $C_6$ fixed points of the differentials in \Cref{lem:d3c2v0}. 
\end{proof}

We will now compute the $d_7$ differentials in the following lemma.
\begin{lem}
    The  $d_7$ differentials in spectral sequence \eqref{lem:c3_fix_points_V(0)} are generated by
\[
\begin{aligned}
&d_7(\alpha^2 u^{-4})=\alpha^9\\
&d_7(u^{-12})= \alpha^7 u^{-8} \\
&d_7(\alpha u^{-20})= \alpha^8 u^{-16}
    \end{aligned}
    \qquad \qquad \qquad
    \begin{aligned}
    &d_7(\alpha u^{-5})=\alpha^8 u^{-1}\\
    &d_7(\alpha^2u^{-13})=\alpha^9 u^{-9}\\
    &d_7(u^{-21})=\alpha^7 u^{-17}
    \end{aligned}
    \]
    and linearity with respect to $\nu$, $u_1^3, \eta$ and $u^{\pm 24}$.
\begin{proof}
The differentials in the left column follow by naturality (or module structure over $\csix$) from the differentials in \Cref{prop:e7_csix}.  The differentials in the right column are restrictions to $C_6$ fixed points of the differentials in  \Cref{lem:3.5}.
Alternatively, the differentials in the right column follow by the Geometric Boundary Theorem (see \cite[Thm. 2.3.4]{ravgreen} and \cite[App. 4]{behrens_EHP} for the general statements and proofs or \cite[Thm. 2.17]{Beaudry_2022} for application to a similar case).
\end{proof}    
\end{lem}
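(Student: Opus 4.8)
The plan is to exploit the identification $E_r(\csix\wedge V(0)) = E_r(\ctwo\wedge V(0))^{C_3}$ together with the two maps of spectral sequences arising from the cofiber sequence $\csix \xrightarrow{2} \csix \xrightarrow{i} \csix\wedge V(0)\xrightarrow{p}\Sigma\csix$. Because $d_7$ is $C_3$-equivariant, every $d_7$ on $\csix\wedge V(0)$ is the restriction of the already-computed $d_7$ on $\ctwo\wedge V(0)$ (generated by $d_7(u^{-4})=\alpha^7$ and $d_7(u^{-5})=u^{-1}\alpha^7$ with linearity over $u_1$, $\alpha$, $u^{\pm 8}$), so the genuine work is to isolate the $C_3$-invariant classes supporting these differentials and to package them as the six stated generators. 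I expect to split this into a bottom-cell half, handled by naturality along $i$, and a top-cell half, handled by the Geometric Boundary Theorem along $p$.

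For the left-hand column I would argue by naturality along $i$. The sources $\alpha^2 u^{-4}$, $u^{-12}$, $\alpha u^{-20}$ lie in the image of $i_*\colon E_7(\csix)\to E_7(\csix\wedge V(0))$, and \Cref{prop:e7_csix} records their differentials on $\csix$; since $i_*$ commutes with $d_7$, these transport directly (for instance $d_7(\alpha^2 u^{-4}) = i_*(\alpha^9) = \alpha^9$, the target being $2$-torsion and hence unaffected by reduction mod $2$). I would only need to confirm that $i_*$ is injective on the relevant targets so that no differential is lost, which is visible from the explicit description of the $E_7$-page.

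For the right-hand column I would invoke the Geometric Boundary Theorem for $p\colon\csix\wedge V(0)\to\Sigma\csix$, exactly as in the $\ctwo\wedge V(0)$ computation and the cited cases \cite[Thm.~2.3.4]{ravgreen}, \cite[App.~4]{behrens_EHP}, \cite[Thm.~2.17]{Beaudry_2022}. Concretely, these three differentials are the $C_3$-invariant avatars of the top-cell differential $d_7(u^{-5})=u^{-1}\alpha^7$ on $\ctwo\wedge V(0)$: multiplying by the powers of $\alpha$ and $u^{-8}$ needed to enter the invariant subring yields $d_7(\alpha u^{-5}) = \alpha^8 u^{-1}$, $d_7(\alpha^2 u^{-13}) = \alpha^9 u^{-9}$, and $d_7(u^{-21}) = \alpha^7 u^{-17}$. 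The Geometric Boundary Theorem is precisely what licenses reading $d_7(u^{-5})$ off the $\csix$-differential $d_7(u^{-4})=\alpha^7$ through the connecting map $p$, and it is also what dictates the extra power of $\alpha$ carried by the top cell.

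Finally I would extend by linearity. The elements $\alpha^3$, $u_1^3$ and $u^{\pm 24}$ are $C_3$-invariant permanent cycles on the $E_7$-page (they are already $d_7$-cycles on $\csix$), so multiplication by them commutes with $d_7$ and the six generators propagate over the whole $48$-periodic page. The main obstacle I anticipate is not any single differential but the completeness bookkeeping: one must verify that every $d_7$-supporting invariant monomial is an $\{\alpha^3,u_1^3,u^{\pm 24}\}$-multiple of one of the six generators, which requires understanding the invariant subring of the $E_7$-page as a module over these permanent cycles, and correctly tracking the additional factor of $\alpha$ that the connecting map introduces on the top cell (so the top-cell sources are $\alpha u^{-5}$, $\alpha^2 u^{-13}$, $u^{-21}$ rather than naive $u^{-1}$-shifts of the bottom-cell sources). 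Sparseness of the $E_7$-page then rules out any remaining $d_7$.
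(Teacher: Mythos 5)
Your proposal is correct and follows essentially the same route as the paper: the left-hand column by naturality from the $d_7$-differentials of $\csix$ recorded in \Cref{prop:e7_csix}, the right-hand column by the Geometric Boundary Theorem applied to the top-cell projection $p$, and propagation by linearity over the permanent cycles $\alpha^3$, $u_1^3$, $u^{\pm 24}$. The additional bookkeeping you flag (injectivity of $i_*$ on the targets, identifying the $C_3$-invariant sources, and sparseness ruling out further $d_7$'s) is implicit in the paper's charts rather than spelled out, but it is not a different argument.
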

A chart of the $E_7$ page is displayed in \Cref{fig:c6v0_e7} and $E_{\infty}=E_8$ page is displayed in \Cref{fig:c6v0_e8}. For the extensions, we use Lemma 2.19 from \cite{Beaudry_2022}, which shows that $2u^{\frac{j}{2}}{u_1}^{i} = \eta \alpha u^{\frac{j}{2}}{u_1}^{i}$. We use the following notation for the generators on the $E_{\infty}$ page (note that $y$ and $\overline{\kappa}$ are images of the like-named elements from $\pi_*E^{hC_6}$ \eqref{eq:c6names}):
\begin{equation}
    x=\alpha^2 u^{-1} \qquad v_1=u_1u^{-1} \qquad v_2=u^{-3}  \qquad y=\alpha u^{-8} \qquad \overline{\kappa}=\alpha^4u^{-8}
\end{equation}
\section{The homotopy fixed point spectral sequence for \texorpdfstring{$\operatorname{E}^{hC_6}_2 \wedge Y$}{pi\_*(C6 sm Y)}}\label{sec:c6y}
In this section, we compute the homotopy fixed point spectral sequence for $\csixy$ 
\begin{equation}
E_2^{s,t}(\E^{hC_{6}}\wedge Y) =H^s(C_6, \pi_{t}(\E \wedge Y)) \Longrightarrow \pi_{t-s}(\E^{hC_6}\wedge Y).
\end{equation}
\subsection{The $E_2$ page}
\begin{figure}[h]
    \includegraphics[width=0.9\linewidth]{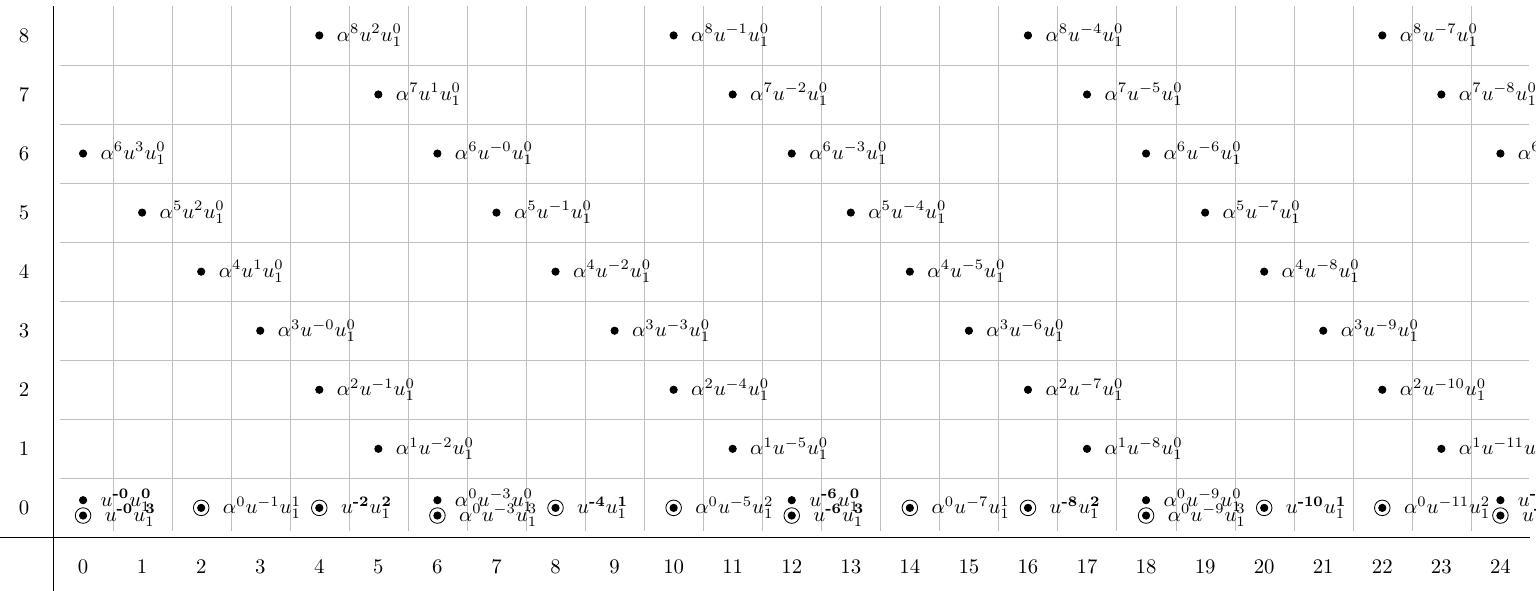}
    \caption{The $E_2$ page of HFPSS for $\csixy$. The symbol $\bullet$ represents $\Fbb_4$, and the symbol $\Circled{\bullet}$ denotes $\Fbb_4 \llbracket u_1^3 \rrbracket$.}\label{fig::c6y_e2}
\end{figure}

The cofiber sequence
\[
\csix \wedge\Sigma V(0) \xrightarrow{\eta} \csix \wedge V(0) \rightarrow \csix \wedge Y
\]
induces maps on the $E_2$ pages of the corresponding homotopy fixed point spectral sequences
\[
\cdots \xrightarrow{\eta} E_{2}^{s, t}(\E^{hC_{6}}\wedge V(0)) \xrightarrow{i} E_{2}^{s, t}(\E^{hC_{6}}\wedge Y) \xrightarrow{p} E_{2}^{s, t-2}(\E^{hC_{6}}\wedge V(0)) \xrightarrow{\eta} E_{2}^{s+1, t}(\E^{hC_{6}}\wedge V(0)) \rightarrow \cdots.
\]

Each multiplication by $\eta$ map above is injective, simply shifting degrees in a power series module. Hence the maps $p:E_{2}^{s, t}(E^{hC_{6}}\wedge Y) \rightarrow E_{2}^{s, t-2}(E^{hC_{6}}\wedge V(0))$  are zero maps. We conclude that each $E_{2}^{s, t}(E^{hC_{6}}\wedge Y)$ is isomorphic to the cokernel of $\eta$:
\[
0 \to E_{2}^{s-1, t-2}(\E^{hC_{6}}\wedge V(0)) \xrightarrow{\eta} E_{2}^{s, t}(\E^{hC_{6}}\wedge V(0)) \to E_{2}^{s, t}(E^{hC_{6}}\wedge Y) \to 0.
\]
The $E_2$ page is presented in \Cref{fig::c6y_e2}. 
\subsection{$d_3$ and $d_5$ differentials}
\begin{theo}\label{thm::c6y_diff_lin}
    All the differentials $d_r$ in the homotopy fixed point spectral sequence  for $\csix \wedge Y$ are linear with respect to $v_1 = u^{-1} u_1$, $\nu=\alpha^3$ and $v_2^{\pm 8}$.
\end{theo}

\begin{proof}
The spectrum $Y$ has a $v_1$-self map, hence 
Lemma 5.12 of \cite{Beaudry_2022} implies that all differentials $d_r$ on the HFPSS for $\csix \wedge Y$ are $v_1 = u^{-1} u_1$-linear. The differentials are $\alpha^3$-linear because $\alpha^3$ is a permanent cycle, as it detects $\nu \in \pi_{3}(\mathbb{S})$. The differentials are $v_2^{\pm 8}$ linear because of the module structure over the HFPSS for $\csix$. 
\end{proof}

Unlike the previous two spectral sequences, the differentials on the $E_3$-page are all trivial. 

\begin{lem} \label{lem:d3y}
    The $d_3$-differentials in the homotopy fixed point spectral sequence for $\csixy$ are all trivial.
\end{lem}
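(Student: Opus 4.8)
The plan is to prove the vanishing of $d_3$ entirely by naturality, transporting the already-computed $d_3$-differentials of the $\csix \wedge V(0)$ spectral sequence across the map induced by $q\colon \csix \wedge V(0) \to \csix \wedge Y$. This map of spectra induces a map of homotopy fixed point spectral sequences $q_*\colon E_r(\csix \wedge V(0)) \to E_r(\csix \wedge Y)$ commuting with every differential. Since there are no $d_2$-differentials (the differentials here are restrictions of those on the $V(0)$-page, where $d_2 = 0$), we have $E_2 = E_3$ in both spectral sequences, and on this page $q_*$ is exactly the surjection onto $\mathrm{coker}(\eta) = E_2(\csix \wedge V(0))/(\eta)$ furnished by the $E_2$-page computation, with kernel the principal ideal $(\eta)$ generated by $\eta = v_1 h$.

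The first step is to check that the image of $d_3$ on $E_3(\csix \wedge V(0))$ is contained in this kernel $(\eta)$. Recall that $d_3$ on $\csix \wedge V(0)$ is generated by $d_3(u_1^2 u^{-2}) = \eta^3$ and $d_3(u^{-3}) = \alpha^3 u^{-1} u_1$ together with linearity over the permanent cycles $\alpha^3$, $u_1^3$, $u^{\pm 12}$. Using $\eta = v_1 h$, $v_1 = u_1 u^{-1}$ and $\nu = \alpha^3 = h^3 v_2$, both generating targets are divisible by $\eta$: one has $\eta^3 \in (\eta)$ trivially, while $\alpha^3 u^{-1}u_1 = \nu v_1 = h^2 v_2 \cdot (v_1 h) = h^2 v_2\,\eta \in (\eta)$. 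Because $(\eta)$ is an ideal and the linearity is over permanent cycles, the entire image of $d_3$ on $E_3(\csix \wedge V(0))$ lies in $(\eta)$.

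With these two facts the conclusion is immediate: given any $x \in E_3(\csix \wedge Y)$, I would use surjectivity of $q_*$ to pick $y$ with $q_*(y) = x$, and then $d_3(x) = d_3(q_* y) = q_*(d_3 y) = 0$ since $d_3 y \in (\eta) = \ker q_*$. As $x$ is arbitrary, $d_3 \equiv 0$.

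The main obstacle is the bookkeeping in the first step — confirming that not merely the two displayed differentials but the whole image of $d_3$ on the $V(0)$-page lands in $(\eta)$. This is where the permanent-cycle linearity is essential: it guarantees the image is the ideal-multiple span of $\eta^3$ and $\nu v_1$, both already in $(\eta)$. A secondary point to verify is that $q_*$ stays surjective on $E_3$, which here is automatic because $E_2 = E_3$ and $q_*$ is already the cokernel projection on $E_2$.
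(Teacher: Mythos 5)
Your argument is correct, but it is genuinely different from the one in the paper. The paper works entirely inside the spectral sequence for $\csixy$: sparseness shows the only classes that could support a nontrivial $d_3$ sit on the $0$-line in stems $w\equiv 4 \pmod 6$ and have the form $u^{-w/2}u_1^2\,\Fbb_4\llbracket u_1^3\rrbracket$, and then the $v_1$-linearity of all differentials (\Cref{thm::c6y_diff_lin}, coming from the $v_1$-self map of $Y$) factors each such class as $v_1$ times a $d_3$-cycle, forcing $d_3=0$. You instead push the computation through the map $q_*\colon E_3(\csixv)\to E_3(\csixy)$, using that on $E_2=E_3$ this is the projection onto $\operatorname{coker}(\eta)$ with kernel the principal ideal $(\eta)=(v_1h)$, and that the image of $d_3$ on the $V(0)$-side is the span of cycle-multiples of $\eta^3$ and $\nu v_1=(h^2v_2)\eta$, hence lands in $(\eta)$; your divisibility checks are right, and the surjectivity of $q_*$ on $E_3$ is exactly as you say. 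What the paper's route buys is reusability: the identical sparseness-plus-$v_1$-linearity argument is applied again verbatim to kill the $d_5$-differentials, whereas your naturality argument is special to the first potentially nonzero page --- on $E_5$ the map $q_*$ is no longer surjective (e.g.\ $v_2$ dies under $d_3$ in the $V(0)$ spectral sequence but its image survives in the $Y$ one), so the lifting step breaks. What your route buys is independence from \Cref{thm::c6y_diff_lin}: you need no $v_1$-self-map input, only the already-completed $d_3$-computation for $\csixv$ and the cokernel description of the $E_2$-page, both of which the paper has in hand at this point.
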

\begin{proof}
    The only groups that could potentially support a nontrivial $d_3$ differential are the groups $E_3^{0,t} = E_2^{0,t}$ for $t \equiv 4 \operatorname{mod} 6$. For these groups  we have \[E_3^{0, t} \cong u^{-\frac{t}{2}}u_1^2 \mathbb{F}_4\llbracket u_1^3\rrbracket \cong (u^{-3})^{\frac{t-4}{6}}v_1^2 \mathbb{F}_4\llbracket u_1^3\rrbracket.\] 
    Then we observe that for any $f\in\mathbb{F}_4\llbracket u_1^3 \rrbracket$, and any integer $n$ we have  $d_3(f(u^{-3})^n) =0$ due to sparseness. Then, since $d_3$ are $v_1$-linear (hence, $v_1^2$-linear), we have that $d_3$ must vanish for any element in $E_3^{0, t}$ for $t\equiv 4 \operatorname{mod} 6$. 
\end{proof}

\begin{figure}[h]
\includegraphics[width=0.9\linewidth]{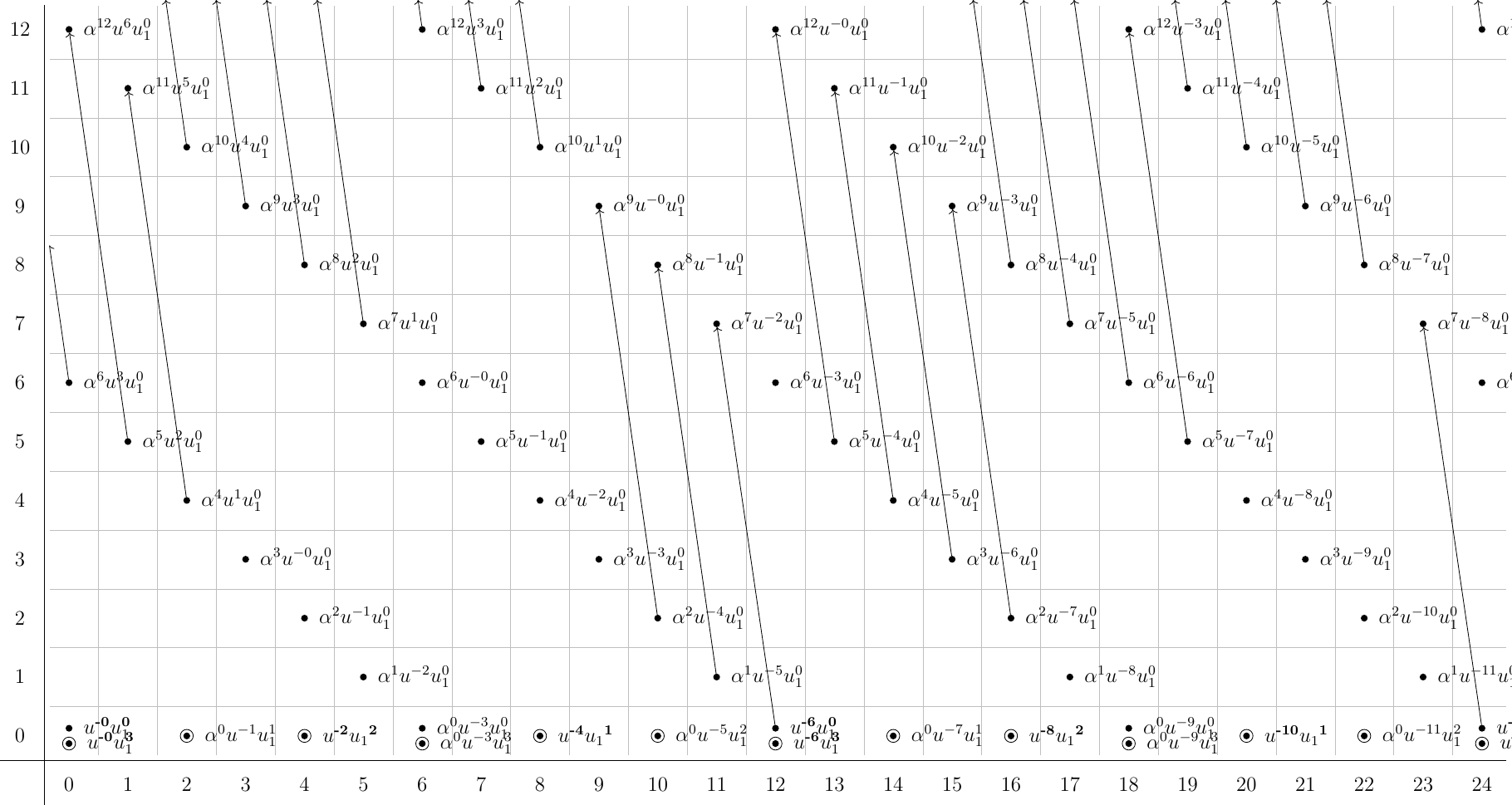}
\includegraphics[width=0.9\linewidth]{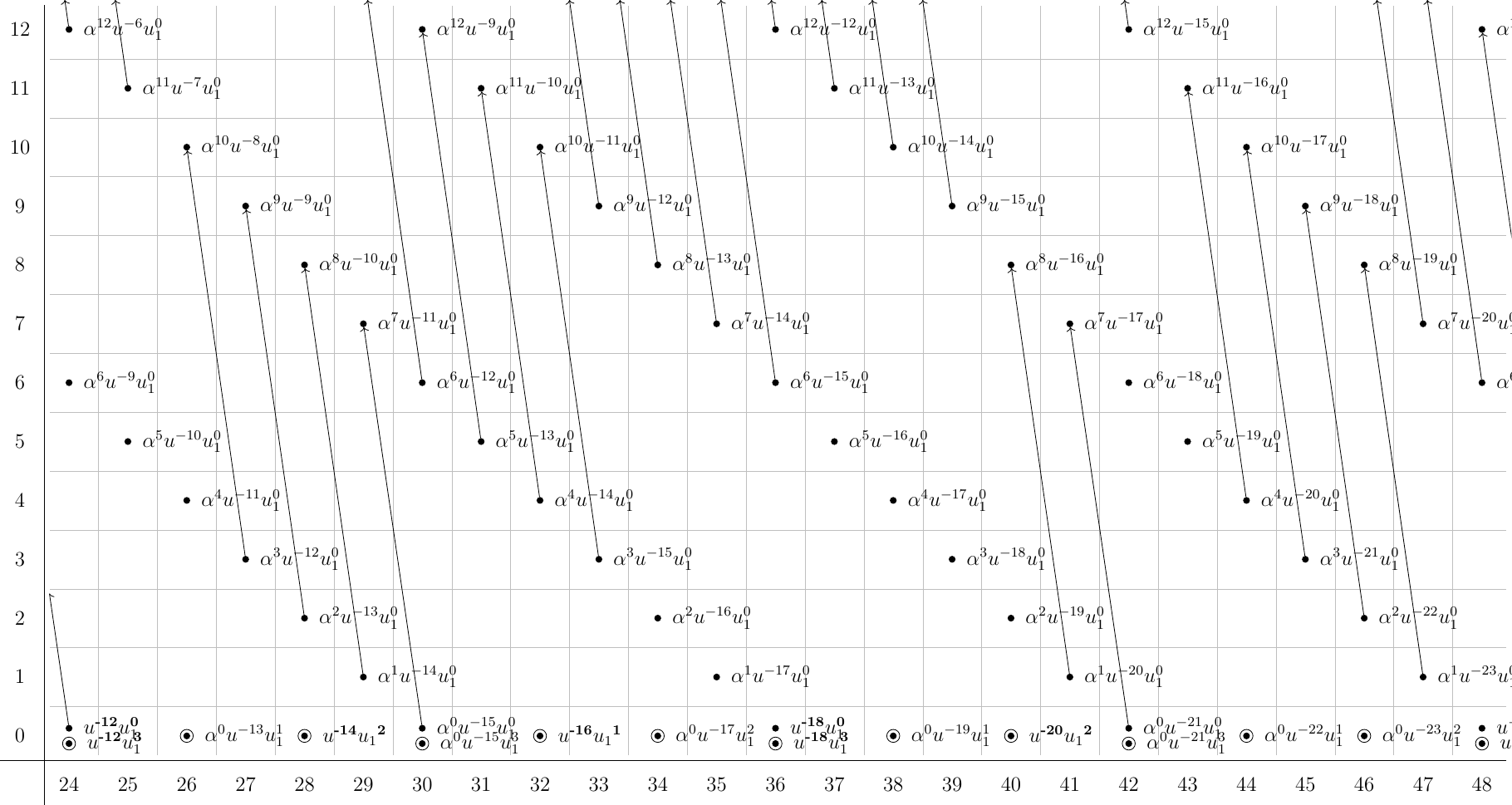}
\caption{The $E_7$-page of the HFPSS for $\csixy$. The symbol $\bullet$ represents $\Fbb_4$.}\label{fig:c6y_e7}
\end{figure}

\begin{lem}
    The $d_5$-differentials in the homotopy fixed point spectral sequence for $\csixy$ are all trivial. 
\end{lem}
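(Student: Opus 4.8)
The plan is to follow the pattern of \Cref{lem:d3y} essentially verbatim, replacing the filtration jump $3$ by $5$ and again leaning on the $v_1$-linearity established in \Cref{thm::c6y_diff_lin}. Since the $E_3$-page has already been identified with the $E_5$-page, it suffices to rule out $d_5$-differentials on the groups pictured in \Cref{fig::c6y_e2}, i.e.\ on the cokernel of multiplication by $\eta = v_1 h$ computed at the beginning of this section.

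First I would record the shape of this $E_2 = E_5$ page explicitly. Writing $v_1 = u^{-1}u_1$, $v_2 = u^{-3}$, and $j_0 = u_1^3$, the filtration-$0$ line consists of the $\Circled{\bullet}$-classes $v_1^a v_2^b j_0^d$ with $a \in \{0,1,2\}$, $b \in \Z$, $d \geq 0$, lying in stem $2a + 6b$, while the classes of filtration $s \geq 1$ are the single copies of $\Fbb_4$ generated by $v_2^b h^s$ in stem $6b - s$ (every $j_0$-multiple of such a class is in the image of $\eta$ and so vanishes in the cokernel). Recall that a $d_5$ lowers the stem by $1$ and raises the filtration by $5$.

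Next I would enumerate the bidegrees in which a $d_5$ could be nonzero by comparing stems modulo $6$. A filtration-$0$ source $v_1^a v_2^b j_0^d$ sits in a stem $\equiv 2a \pmod 6$, whereas the only possible targets are the filtration-$5$ classes $v_2^{b'} h^5$, which lie in stems $\equiv 1 \pmod 6$; the condition $2a - 1 \equiv 1 \pmod 6$ forces $a = 1$, so the sole candidate differentials are
\[ d_5\bigl(v_1 v_2^b j_0^d\bigr) \in v_2^{b+1} h^5 \cdot \Fbb_4. \]
(The analogous comparison for a source $v_2^b h^s$ of filtration $s \geq 1$ yields $6(b'-b) = 4$, which has no solution, so no positive-filtration class supports a $d_5$.) Finally, since $v_1 v_2^b j_0^d = v_1 \cdot (v_2^b j_0^d)$ and the $a = 0$ class $v_2^b j_0^d$ has its $d_5$-target in a stem $\equiv 5 \pmod 6$, where the $E_5$-page is zero, $v_1$-linearity gives $d_5(v_1 v_2^b j_0^d) = v_1\, d_5(v_2^b j_0^d) = 0$. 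This is exactly the reduction used in \Cref{lem:d3y}, with the $v_1$-divisibility now lowering the $v_1$-power $a$ from $1$ to $0$ rather than from $2$ to $1$.

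The only delicate point is the bookkeeping of the cokernel-of-$\eta$ page: I must be sure that the filtration jump of $5$ never accidentally aligns a surviving class with a nonzero target outside the $a = 1$ case, and that each such candidate source is genuinely $v_1$-divisible with a target-free quotient. The mod-$6$ stem count above is precisely what excludes any coincidental alignment coming from the $h$-towers in positive filtration, so once that count is verified the vanishing is immediate, and no input beyond the $v_1$-linearity of \Cref{thm::c6y_diff_lin} is needed.
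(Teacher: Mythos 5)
Your proof is correct and takes essentially the same route as the paper's: both identify the only candidate sources as the filtration-zero classes in stems $\equiv 2 \pmod 6$, factor out $v_1 = u^{-1}u_1$, and conclude by the $v_1$-linearity of \Cref{thm::c6y_diff_lin} because the resulting $a=0$ class maps to a zero group. Your mod-$6$ enumeration of source and target bidegrees simply makes explicit the degree count that the paper leaves implicit in the phrase ``the only possible sources.''
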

\begin{proof}
    As in \Cref{lem:d3y}, the only possible sources for $d_5$ differentials are the groups $E_5^{0, t} = u^{-\frac{t}{2}}u_1\mathbb{F}_4\llbracket u_1^3\rrbracket$ for $t \equiv 2 \pmod 6$. For $f \in \mathbb{F}_4\llbracket u_1^3\rrbracket$, we have  $d_5(f u^{-\frac{t}{2}}u_1) = v_1d_5(f u^{-(\frac{t}{2} - 1)})  = 0$.
\end{proof}

    \begin{figure}[h]
\includegraphics[width=0.9\linewidth]{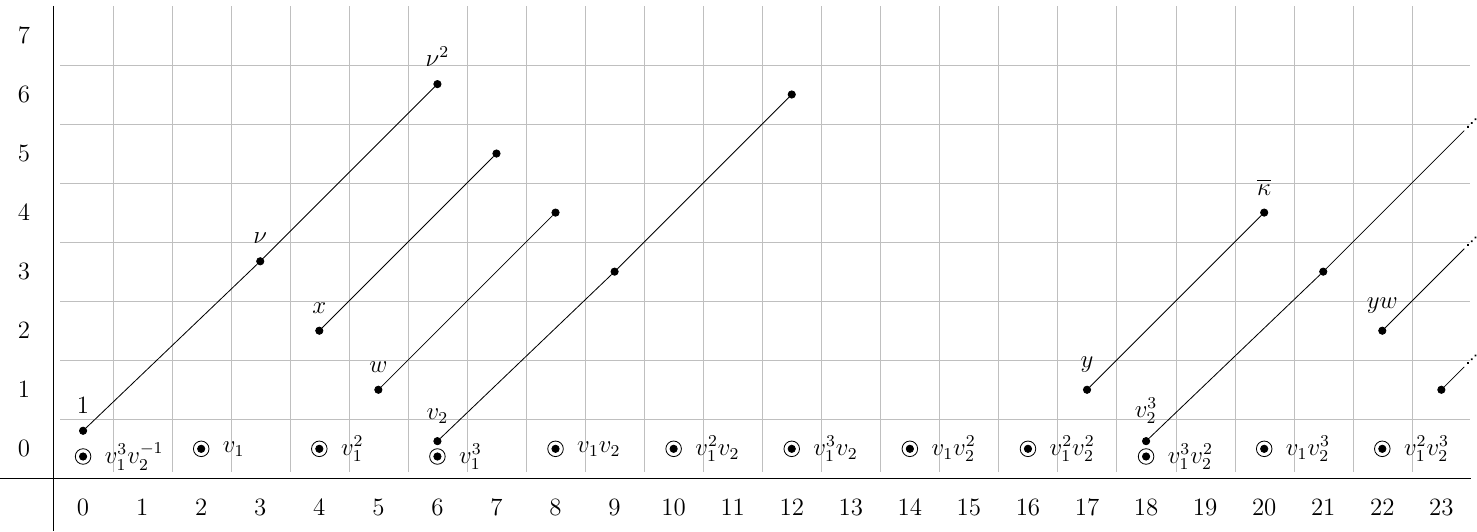}
\includegraphics[width=0.9\linewidth]{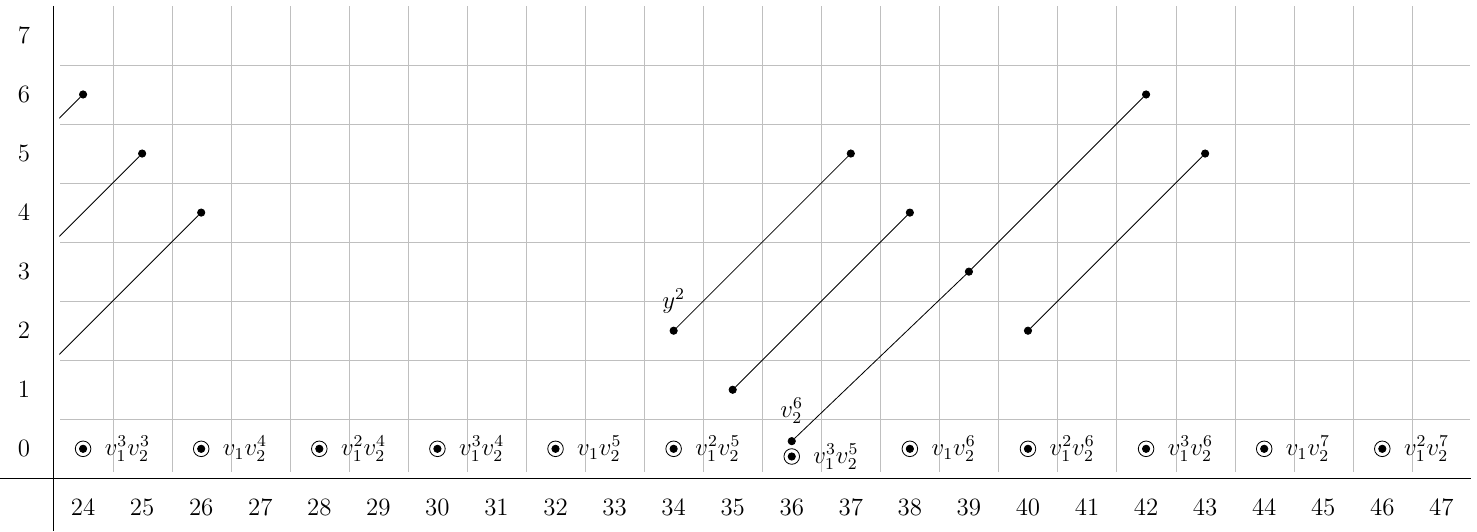}
\caption{The $E_{8} = E_{\infty}$ page of the HFPSS for $\csixy$. The symbol $\bullet$ represents $\Fbb_4$, and the symbol $\Circled{\bullet}$ denotes $\Fbb_4 \llbracket u_1^3 \rrbracket$. The lines represent multiplication by $\nu$. The homotopy groups are 48-periodic, with periodicity generator $v_2^8$.}\label{fig:c6y_cinfinity}
\end{figure}

\subsection{The \texorpdfstring{$d_7$}{d7}-differentials} 
Recall that the cofiber sequence 
\[
\Sigma \csix  \wedge V(0)\xrightarrow{\eta} \csix \wedge V(0) \rightarrow \csix \wedge Y
\]
induces a long exact sequence in homotopy groups
\begin{equation}\label{eq:LES-V-Y}
\cdots \rightarrow \pi_{k-1}(\csix \wedge V(0)) \xrightarrow{\eta} \pi_{k}(\csix \wedge V(0)) \rightarrow \pi_{k}(\csix \wedge Y) \rightarrow\cdots.    
\end{equation}

From this, we can gain information about some of the groups in $\pi_{\ast}(\csix \wedge V(0))$, which will help us compute the $d_7$ differentials.

\begin{lem}\label{lem::c6y_e7_gen}
The $d_7$ differentials in the HFPSS for $\csixy$ are generated by
\[
\begin{aligned}
&d_7(\alpha^2 u^{-4}) = \alpha^{9}\\
&d_7(u^{-12})=\alpha^7 u^{-8}\\
&d_7(\alpha u^{-20})= \alpha^8 u^{-16}
\end{aligned}
\qquad 
\begin{aligned}
&d_7(\alpha u^{-5})= \alpha^{8} u^{-1}\\
&d_7(\alpha^2u^{-13})=\alpha^9u^{-9}\\
&d_7(u^{-21})= \alpha^7 u^{-17}
\end{aligned}
\qquad \qquad 
\begin{aligned}
 &d_7(u^{-6})=\alpha^7 u^{-2}\\
 &d_7(\alpha u^{-14})=\alpha^8 u^{-10}\\
 &d_7(\alpha^2 u^{-22})= \alpha^9 u^{-18}
\end{aligned}
\qquad
\begin{aligned}
 &d_7(\alpha^2 u^{-7})=\alpha^9 u^{-3}\\
 &d_7(u^{-15})=\alpha^7 u^{-11}\\
 &d_7(\alpha u^{-23})= \alpha^8 u^{-19}
\end{aligned}
\]
and linearity with respect to $\nu=\alpha^3$ and $u^{\pm 24}$.
\end{lem}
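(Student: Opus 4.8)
The plan is to combine naturality along the cofiber sequence $\Sigma V(0)\xrightarrow{\eta}V(0)\to Y$ with the Geometric Boundary Theorem for that same sequence. By \Cref{thm::c6y_diff_lin} every $d_7$ is $v_1$- and $\alpha^3$-linear, and the $E_7$-page is $u^{\pm24}$-periodic, so it suffices to establish the nine displayed differentials on their leading monomials; everything else follows by $v_1$-, $\alpha^3$-, and $u^{\pm24}$-linearity. Throughout I will use that a monomial $\alpha^a u^{-b}$ with $a+b\equiv 0\pmod 3$ is $u_1$-free, namely $\alpha^a u^{-b}=h^a v_2^{(a+b)/3}$, and therefore is not in the image of multiplication by $\eta=v_1 h$, which strictly raises the $u_1$-valuation. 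Every source and target appearing in the lemma is of this form, and hence survives the quotient map $\iota\colon E_r(\csixv)\to E_r(\csixy)$ realizing $E_2(\csixy)$ as $\operatorname{coker}(\eta)$.

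The first two columns require no new computation. They are precisely the $d_7$-differentials of $\csixv$ established in \Cref{sec:c6v0} (see \Cref{fig:c6v0_e7}), whose sources and targets we have just seen are nonzero in $\csixy$. Since $\iota$ is a map of spectral sequences, applying it to those differentials produces the first two columns of the present lemma verbatim.

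The third column is the genuinely new content, and it arises because passing to the cofiber of $\eta$ alters the $d_3$-page. Several of its sources support nonzero $d_3$-differentials in $\csixv$ whose targets are $\eta$-divisible, hence vanish in the cokernel $E_3(\csixy)$, so these classes newly survive to $E_7$. For example, $\alpha u^{-14}=w_5[v_2^2]^2$ satisfies $d_3(\alpha u^{-14})=d_3(w_5)[v_2^2]^2=\alpha^4 u_1 u^{-12}=v_1 v_2^5 h^4$ in $\csixv$ by \Cref{lem:e3-csix}, and $v_1v_2^5h^4=(v_1h)\,v_2^5h^3$ is a multiple of $\eta$, so $\alpha u^{-14}$ becomes a $d_3$-cycle in $\csixy$ and reaches the $E_7$-page. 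Having identified the new $E_7$-survivors in this way, I would pin down their $d_7$ using the Geometric Boundary Theorem for $\Sigma V(0)\xrightarrow{\eta}V(0)\to Y\xrightarrow{\partial}\Sigma^2V(0)$ (as in \cite[Thm.~2.17]{Beaudry_2022}, \cite[Thm.~2.3.4]{ravgreen}, \cite[App.~4]{behrens_EHP}): the connecting map $\partial$ transports each second-column differential of $\csixv$ to the third-column differential of $\csixy$ obtained from it by the algebraic shift $\alpha^{-1}u^{-1}$; for instance $d_7(\alpha^2 u^{-13})=\alpha^9 u^{-9}$ in $\csixv$ yields $d_7(\alpha u^{-14})=\alpha^8 u^{-10}$ in $\csixy$.

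The main obstacle is exactly this last step. Unlike the first two columns, the third-column differentials are not images under $\iota$ of differentials already known on $\csixv$, so naturality is silent about them; moreover the connecting map $\partial$ vanishes on $E_2$ and only governs the $E_7$-page through the Geometric Boundary Theorem, so the bookkeeping of filtrations and of the intervening $\eta$-multiplication must be carried out with care. To remove any remaining ambiguity I would finish by comparing with the homotopy long exact sequence $\pi_*(\csixv)\xrightarrow{\iota}\pi_*(\csixy)\xrightarrow{\partial}\pi_{*-2}(\csixv)$: the group orders it forces on $\pi_*(\csixy)$ must agree with the $E_\infty$-page produced by running exactly the nine stated differentials, and this rigidity leaves no alternative pattern of $d_7$'s.
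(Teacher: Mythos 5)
Your proposal follows the paper's proof exactly: the paper likewise obtains the left and middle columns by naturality from the $d_7$-differentials of $\csixv$ and the right column from the Geometric Boundary Theorem, and the details you supply (the $u_1$-free sources and targets surviving to $\operatorname{coker}(\eta)$, the third-column sources becoming $d_3$-cycles because their $d_3$-images are $\eta$-divisible) are correct. One small caution on bookkeeping: since $\partial\colon\pi_n(\csixy)\to\pi_{n-2}(\csixv)$ lowers the stem by $2$ while your shift $\alpha^{-1}u^{-1}$ raises it by $1$, the boundary of a third-column source is in fact detected by an $\alpha^3$-multiple of a \emph{first}-column source (e.g.\ $\partial(\alpha u^{-14})$ is detected by $\alpha^3 u^{-12}$, not by $\alpha^2u^{-13}$), though by $\alpha^3$-linearity this produces exactly the formulas you state.
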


\begin{proof}
  The differentials in the two columns on the left follow from naturality and differentials in  HFPSS for $E^{hC_6} \wedge V(0)$.  

  The simplest way to deduce the other differentials is to compute some of the homotopy groups $\pi_i \csix \wedge Y$ using \eqref{eq:LES-V-Y} and analyze the $E_7$ page of the homotopy fixed point spectral sequence to see what differentials will need to be non-trivial to ensure those homotopy groups values. 
  
  First,  observe that $\pi_{15} (\csix \wedge Y) =0$. The only way to make that happen in the homotopy fixed  point spectral sequence is to have $d_7(u^{-6}\alpha^3)=\alpha^{10}u^{-2}$ and $d_7(\alpha^2  u^{-7})=\alpha^9 u^{-3}$. The former differential and $\alpha^3$-linearity then imply $d_7(u^{-6})=\alpha^7 u^{-2}$.  

  Next, $\pi_{29}(\csix \wedge Y)=0$ implies the differentials $d_7(\alpha u^{-14})=\alpha^8 u^{-10}$ and $d_7(u^{-15})=\alpha^7 u^{-11}$. 

  Finally, $\pi_{45}(\csix \wedge Y)=\pi_{47}(\csix \wedge Y)=0$ implies the differentials $d_7(\alpha^2 u^{-22})=\alpha^9 u^{-18}$ and $d_7(\alpha u^{-23})=\alpha^8 u^{-19}$. 
\end{proof}

A chart of the $E_7$ page can be found in \Cref{fig:c6y_e7}.
Due to sparceness, there are no $d_r$ for $r>7$, so we have $E_8 = E_{\infty}$.
It remains for us to resolve the extensions.
    By Lemma 2.19 of \cite{Beaudry_2022}, any element divisible by two is in the image of $\eta$, hence every group extension on the $E_{\infty}$ page splits.

\FloatBarrier

\bibliographystyle{alpha}
\bibliography{references.bib}

\end{document}